\documentclass[12pt]{article}
\usepackage{amsmath,amssymb,indentfirst,bm}
\usepackage{graphicx, epsfig, url}
\usepackage[authoryear]{natbib}
\usepackage{amsthm, verbatim}
\usepackage{setspace}
\usepackage{xcolor}
\usepackage{enumitem}

\newtheorem{definition}{Definition}[section]

\newtheorem{example}{Example}[section]
\newtheorem{lemma}{Lemma}[section]

\newtheorem{corollary}{Corollary}[section]
\newtheorem{proposition}{Proposition}[section]
\newtheorem{criterion}{Criterion}[section]
\newtheorem{counterexample}{Counterexample}[section]
\newtheorem{method}{Method}[section]

\usepackage[
  a4paper,
  left=25mm,
  right=25mm,
  top=25mm,
  bottom=25mm,
  includeheadfoot
]{geometry}

\title{Version-Robust Methods for Identifying Minimal Sufficient Statistics}

\author{Rafael Oliveira Cavalcante\\
     {\it \footnotesize Departamento de Estat\'istica, IME,
     Universidade de S\~ao Paulo}
     \vspace{-0.2cm}\\
     {\it \footnotesize Rua do Mat\~ao, 1010, S\~ao Paulo/SP, 05508-090, Brazil}
     \vspace{-0.2cm}\\
     {\footnotesize email: {\tt rflolcante@gmail.com}}
     \and
     Alexandre Galv\~ao Patriota\\
     {\it \footnotesize Departamento de Estat\'istica, IME,
     Universidade de S\~ao Paulo}
     \vspace{-0.2cm}\\
     {\it \footnotesize Rua do Mat\~ao, 1010, S\~ao Paulo/SP, 05508-090, Brazil}
     \vspace{-0.2cm}\\
     {\footnotesize email: {\tt patriota@ime.usp.br}}
}
\date{}

\begin{document}
\maketitle

\onehalfspacing
\begin{abstract}
Let $f_\theta$ be the joint density of a random sample $X$. A frequently used criterion asserts that a statistic $T(X)$ is minimal sufficient if, for any sample points $x$ and $y$, $T(x) = T(y)$ exactly when there exists a finite constant $h_{xy} > 0$, independent of $\theta$, such that $f_\theta(y) = f_\theta(x)h_{xy}$ for all $\theta$. We show that this criterion is false in general via a counterexample exploiting the non-uniqueness of versions of Radon--Nikodym derivatives. Although \cite{Sato1996} established sufficient regularity conditions for the validity of this criterion, these conditions are frequently intractable to verify in practice. We resolve this limitation by introducing a version-robust method applicable whenever sufficiency is known. Moreover, our method allows us to generalize Sato's approach from Euclidean settings to arbitrary analytic Borel sample spaces and separable measurable statistic spaces. We also obtain a method for exponential-family densities under verifiable hypotheses. Taken together, these results clarify when pointwise likelihood-ratio arguments for minimal sufficiency are mathematically sound in irregular settings. Finally, we construct a counterexample demonstrating that a distinct criterion for minimal sufficiency due to \cite{Pfanzagl1994, Pfanzagl2017} similarly fails in the absence of supplementary hypotheses.
 Identifying minimal sufficient statistics is important not only for parsimonious data reduction but also because, in models admitting complete sufficiency, such statistics provide a practical route to the complete sufficient structure underlying optimal estimation and prediction.
\end{abstract}

{\bf Keywords:} Analytic Borel spaces; exponential families; irregular models; minimal sufficiency; minimal sufficient statistics; Pfanzagl's criterion; Radon--Nikodym derivatives; Sato's method.

\section{Introduction}
\label{sec:intro}

Minimal sufficiency is a data-reduction problem in dominated statistical models: one seeks a statistic that retains all inferentially relevant information in the sample while being minimal among sufficient reductions. A common strategy is to compare sample points through proportionality relations between likelihoods or densities. According to the Lehmann--Scheff\'e theorem \citep[Theorem 7.17]{LieseMiescke2008}, one route to finding a uniformly minimum-variance unbiased estimator is to condition an unbiased estimator on a statistic that is both sufficient and complete. In practice, however, complete sufficient statistics are often difficult to construct directly. Nevertheless, the relation between completeness and minimal sufficiency remains important: whenever a model admits a sufficient and complete statistic, every minimal sufficient statistic is automatically complete. For this reason, methods for identifying minimal sufficient statistics are useful not only for data reduction itself but also, in such models, as an indirect route to complete sufficiency in applications such as estimation theory. This connection is also relevant for prediction, since in such models an identified minimal sufficient statistic may serve as the complete sufficient statistic underlying the Bayes-optimal prediction procedures with frequentist coverage control developed in \cite{Hoff2023}.

Let $\Theta$ be the parameter space and $T$ be a statistic defined on the statistical model. Informally, $T$ is said to be sufficient if, and only if, the data distribution conditional on $T$ does not depend on $\theta\in \Theta$, and $T$ is said to be minimal sufficient if, and only if, it is sufficient and, given any sufficient statistic $S$, there exists a measurable map $f$ such that $T = f \circ S$ almost everywhere. Throughout this paper, the formal definitions of statistical models, statistics, sufficiency and minimal sufficiency, standard and analytic Borel spaces, countably generated spaces and separable measurable spaces are collected in Appendix~\ref{sec:definitions}.

In this paper, we discuss the limitations of two pointwise criteria for identifying minimal sufficiency, propose version-robust methods that avoid such limitations, and also generalize a method proposed by \cite{Sato1996}. The first and widely cited criterion considered in this paper can be found in texts such as \cite{Schervish1995}, see his Theorem 2.29, \cite{Wasserman2004}, see his Theorem 9.36, and \cite{Mavrakakis2021}, see their Proposition 10.1.13, and it is stated as follows.
\begin{criterion}
\label{crit:2.1}
Let $f_\theta (x)$ be a joint density of a random sample $X$. A statistic $T(X)$ is minimal sufficient if, for any two sample points $x$ and $y$, we have $T(x)=T(y)$ iff there exists a finite constant $h_{xy}>0$ independent of $\theta$ such that \[f_\theta (y)=f_\theta (x)h_{xy} \quad \mbox{for every} \ \theta\in \Theta.\]
\end{criterion}

In this formulation, the restriction applies over the entire (possibly uncountable) parameter space, which makes the pointwise proportionality relation sensitive to the chosen versions of the densities and allows us to develop Counterexample \ref{ex:3.1}. 
Although widely quoted as a method for minimal sufficiency \citep[see also][]{Young2005,Makarov2013,Olive2014,Gasperoni2025}, 
Criterion~\ref{crit:2.1} is often stated without explicit regularity hypotheses and is false in full generality. In particular, since densities are only defined almost everywhere, one can modify versions on null sets in a $\theta$-dependent way and thereby change the pointwise proportionality relation. 

The formal conditions under which a reformulation of this method holds, preventing our counterexample, were first established by \cite{Sato1996}. This method is restricted to Euclidean spaces and is not simple to apply in practice. To bypass this practical difficulty, when a candidate statistic is already known to be sufficient (a condition that is typically easy to verify via the Neyman--Fisher factorization theorem), we introduce a more direct version-robust minimality method (Method~\ref{meth:3.1}). We then use this approach to extend Sato's method beyond Euclidean spaces to analytic Borel sample spaces and separable measurable statistic spaces (Method~\ref{meth:3.2}), and to establish a minimal-sufficiency method for exponential-family densities (Method~\ref{meth:3.3}).

The second criterion considered in this paper was developed by \cite{Pfanzagl1994} and reformulated in \cite{Pfanzagl2017}. This approach is formally stated in terms of dominated statistical models, countably generated measurable spaces, measurable functions and standard Borel spaces, which are revisited in our Appendix~\ref{sec:definitions}. By using these concepts, we present Pfanzagl's criterion.  

\begin{criterion}
\label{crit:2.6}
Let $(\mathcal{X},\Sigma,\{P_\theta\}_{\theta\in\Theta })$ be a statistical model and $\mu :\Sigma\to\overline{\mathbb{R}}$ a $\sigma$-finite measure such that $P_\theta \ll \mu $ for every $\theta\in\Theta$ (i.e., $P_\theta$ is absolutely continuous w.r.t. $\mu $ for every $\theta\in \Theta$). Suppose that $(\mathcal{X},\Sigma )$ is countably generated. Let $(\mathcal{T},\Sigma _\mathcal{T})$ be a standard Borel space and $T:\mathcal{X}\to \mathcal{T}$ a measurable function. For each $\theta\in\Theta$, let $f_\theta :\mathcal{X}\to \mathbb{R}$ be a density of $P_\theta$ w.r.t. $\mu$. Suppose that, for each $\theta\in\Theta$, we have $f_\theta =g_\theta (T)h$, where $h:\mathcal{X}\to \mathbb{R}$ and $g_\theta :\mathcal{T}\to \mathbb{R}$ are two non-negative measurable functions. If there exists a non-empty countable subset $\Theta _0\subseteq\Theta$ such that, for any $t_1,t_2\in \mathcal{T}$ satisfying $g_\theta (t_1)=g_\theta (t_2)$ for every $\theta\in\Theta _0$, we have $t_1=t_2$, then $T$ is minimal sufficient.
\end{criterion}
This criterion corresponds to Theorem 1.4.4 of \cite{Pfanzagl1994}. A later version of this result in \cite{Pfanzagl2017} is stated with the additional requirement that $\{P_{\theta }\}_{\theta\in\Theta _0}$ is dense in $\{P_\theta \}_{\theta\in \Theta }$ w.r.t. the total variation distance, defined as $d(P,Q):=\sup _{E\in \Sigma }|P(E)-Q(E)|$. However, as noted at the beginning of the proof in \cite{Pfanzagl1994}, this density assumption is not necessary and may be omitted. Note also that Pfanzagl's criterion imposes a restriction only on at most a countable subset of the parameter space. However, it still does not hold without additional assumptions, as shown by our Counterexample~\ref{ex:3.2}. 

The remainder of this paper is organized as follows. In Section \ref{sec:counterexamples}, the limitations of these methods are demonstrated through specific counterexamples that motivate the need for version-robust methods. Section \ref{sec:methods} introduces the corrected and generalized methods that form the core contribution of this work, followed by examples illustrating their applications. Section \ref{sec:proofs} is dedicated to the formal proofs of these methods.  Appendix \ref{sec:definitions} presents the measurable-space background used throughout the paper, namely, statistical models (Definition \ref{def:statistical_model}), sufficient statistics (Definition \ref{def:suff}), standard Borel spaces (Definition \ref{def:stand_Borel}), countably generated spaces (Definition \ref{def:countable_space}), separable measurable spaces (Definition \ref{def:separable}), and analytic Borel spaces (Definition \ref{def:analytic_Borel}). We denote the set of natural numbers by $\mathbb{N} = \{0,1,2, \ldots \}$. Moreover, we will abbreviate the expressions ``if, and only if,'' and ``with respect to'' by ``iff'' and ``w.r.t.'', respectively.

\section{Counterexamples}
\label{sec:counterexamples}

In this section, we construct counterexamples showing Criteria \ref{crit:2.1} and \ref{crit:2.6} require further regularity hypotheses. The first, similar to the example from \citet{Barndorff1976} and closely related to Example 1 of \cite{Taraldsen2026}, perturbs the Gaussian density at a single $\theta$-dependent point, falsifying Criterion \ref{crit:2.1}. The second invalidates Criterion \ref{crit:2.6} on a finite probability space.
These counterexamples motivate the methods introduced in Section \ref{sec:methods}.

\begin{counterexample}
\label{ex:3.1}
Let $X_1,\cdots, X_n$ be a random sample with normal distribution $N(\theta, 1)$, $\theta\in\mathbb{R}$, with $n\geq 2$. Using the Neyman-Fisher Factorization Theorem, we can conclude that $\sum _{i=1}^nX_i$ is sufficient. Denote by $\lambda ^n$ the Lebesgue measure on $\mathbb{R}^n$. Let $g:\mathbb{R}\to \mathbb{R}^n$ be any surjective function. Define $f_\theta :\mathbb{R}^n\to \mathbb{R}$ by
$$f_\theta (x_1,\cdots, x_n):=\frac{1}{(2\pi )^{n/2}}\exp \left\{-\frac{1}{2}\left(\sum_{i=1}^nx_i^2\right)+\theta \left(\sum_{i=1}^nx_i\right)-\frac{n}{2}\theta ^2\right\}\mathbf{1}_{\mathbb{R}^n\setminus \{g(\theta )\}}(x_1,\cdots, x_n),$$
where $\mathbf{1}_{\mathbb{R}^n\setminus \{g(\theta )\}}$ denotes the indicator function of the set $\mathbb{R}^n\setminus \{g(\theta )\}$. Since $\lambda^n(\{g(\theta )\})=0$, $f_\theta $ is a joint density of $X:=(X_1,\cdots, X_n)$. Let $T:\mathbb{R}^n\to \mathbb{R}^n$ be the identity function. Since $g$ is surjective, we infer that, for any $x,y\in\mathbb{R}^n$, we have $T(x)=T(y)$ iff there exists a finite constant $h_{xy}>0$ independent of $\theta$ such that $f_\theta (y)=f_\theta (x)h_{xy}$ for every $\theta\in\mathbb{R}$. (Indeed, if $x\neq y$, we can choose a $\theta\in\mathbb{R}$ such that $g(\theta )=y$ due to surjectivity. For this $\theta$, the indicator function in the definition of $f_\theta$ causes $f_\theta (y)=0$, while $f_\theta (x)\neq 0$, making the equality $f_\theta (y)=f_\theta (x)h_{xy}$ impossible for any $h_{xy}>0$.) Hence, according to Criterion \ref{crit:2.1}, $X=T(X)$ is a minimal sufficient statistic, which implies, recalling that $\sum _{i=1}^nX_i$ is sufficient, that there exists a measurable function $u:\mathbb{R}\to \mathbb{R}^n$ such that $(x_1,\cdots, x_n)=u(\sum _{i=1}^nx_i)$ $P_\theta$-a.e. for every $\theta\in\mathbb{R}$, where $P_\theta $ is the probability measure given by $P_\theta (E):=\int _Ef_\theta d\lambda ^n$. Since $f_\theta >0$ $\lambda ^n$-a.e. for every $\theta\in\mathbb{R}$, we conclude that $(x_1,\cdots, x_n)=u(\sum _{i=1}^nx_i)$ $\lambda ^n$-a.e., a contradiction. To see this, define $A:=\{x\in\mathbb{R}^n:u(\sum _{i=1}^nx_i)=x\}$ and $V:=\{x\in\mathbb{R}^n:\sum _{i=1}^nx_i=0\}$. It is straightforward to prove that $A$ is measurable and $(A-A)\cap (V\setminus \{0\})=\emptyset$, in which $A-A:=\{a_1-a_2:a_1,a_2\in A\}$. However, if $\lambda ^n(A)>0$, then, by Steinhaus Theorem, we know that $A-A$ contains an open ball centered at $0$, allowing us to conclude that $(A-A)\cap (V\setminus \{0\})\neq \emptyset $.
\end{counterexample}

Observe that, for each $\theta$, the function $f_\theta$ differs from the usual $N(\theta,1)^{\otimes n}$ joint density only on the $\lambda^n$-null set $\{g(\theta)\}$. Hence the induced measure $P_\theta(E):=\int_E f_\theta\, d\lambda^n$ coincides with the usual $N(\theta,1)^{\otimes n}$ law. The counterexample therefore exploits the non-uniqueness of Radon--Nikodym derivatives (choice of version), not a change in the statistical model.

The previous counterexample demonstrates that Criterion \ref{crit:2.1}, as commonly stated in the literature, does not hold in general. The proof that often accompanies this criterion, while seemingly correct, typically contains a subtle error. The flaw lies in an imprecise application of the Neyman-Fisher Factorization Theorem. Specifically, this factorization theorem states that if $f_\theta $ is a joint density of a random sample $X$ and $T(X)$ is a sufficient statistic, then there exist measurable functions $g_\theta$ and $h$ such that $f_\theta =g_\theta (T)h$ almost everywhere. The theorem, however, does not guarantee that this equality holds for every point. Therefore, for a pre-specified density $f_\theta$, a specific sample point $x$, and sufficient statistic $T$, one cannot guarantee that there exist measurable functions $g_\theta$ and $h$ such that $f_\theta (x)=g_\theta (T(x))h(x)$.

We conclude this discussion with the following counterexample, which shows that Criterion \ref{crit:2.6} does not hold without additional conditions.

\begin{counterexample}
\label{ex:3.2}
Define $\mathcal{X}:=\{1,\,2,\,3,\,4\}$ and let $(\mathcal{X},\,2^\mathcal{X},\{P_\theta\}_{\theta\in (0,1)})$ be a statistical model, where $2^\mathcal{X}$ is the collection of all subsets of $\mathcal{X}$ and, for every $\theta\in(0,1)$, $P_\theta$ is the probability on $2^\mathcal{X}$ given by $P_\theta (\{1\})=\theta /3$, $P_\theta (\{2\})=(2\theta )/3$, $P_\theta (\{3\})=(1-\theta )/3$, and $P_\theta (\{4\})=2(1-\theta )/3$. For each $\theta\in(0,1)$, define $p_\theta:\mathcal{X}\to \mathbb{R}$ by $p_\theta (i):=P_\theta (\{i\})$. Note that, for each $\theta\in(0,1)$, $p_\theta$ is a density of $P_\theta$ w.r.t. the counting measure on $\mathcal{X}$ (note that, in this case, the counting measure is $\sigma$-finite, since $\mathcal{X}$ is a finite set). The space is a standard Borel space \citep[see the first example of Section 2.2 of][]{Srivastava1998}. Define $T:\mathcal{X}\to \{0,\,1\}$ by $T:=\mathbf{1}_{\{1,2\}}$. Then $T$ is measurable w.r.t. $\sigma$-algebras $2^\mathcal{X}$ and $2^{\{0,1\}}$. Next, we will prove that $T$ is sufficient.

We have $p_\theta (x)=g_{\theta}(T(x))h(x)$ for any $\theta\in(0,1)$ and $x\in \mathcal{X}$, where $h:\mathcal{X}\to \mathbb{R}$ is given by $h(1)=h(3)=1/3$ and $h(2)=h(4)=2/3$ and, for each $\theta\in(0,1)$, $g_\theta :\{0,1\}\to \mathbb{R}$ is given by $g_\theta (0):=1-\theta $ and $g_\theta (1):=\theta $. Note that $h$ and $g_\theta$ are non-negative measurable functions for every $\theta\in (0,1)$. Hence, using the Neyman-Fisher Factorization Theorem, we conclude that $T$ is sufficient.

Define $U:\mathcal{X}\to \mathcal{X}$ by $U(x):=x$. Then $U$ is measurable. We will now prove that $U$ is minimal sufficient using Pfanzagl's method. We can write $p_\theta (x)=\tilde g_{\theta}(U(x))\tilde h(x)$ for any $\theta\in (0,1)$ and $x\in \mathcal{X}$, where $\tilde h:\mathcal{X}\to \mathbb{R}$ is given by $\tilde h(x):=1$, and $\tilde g_\theta :\mathcal{X}\to \mathbb{R}$ (for each $\theta\in (0,1)$) is given by $\tilde g_\theta := p_\theta $. Note that $\tilde h$ and $\tilde g_\theta$ are non-negative measurable functions for every $\theta\in(0,1)$. It is straightforward to verify that, for any $t_1,t_2\in \mathcal{X}$ satisfying $\tilde g_\theta (t_1)=\tilde g_\theta (t_2)$ for every $\theta\in (0,1)\cap \mathbb{Q}$, we have $t_1=t_2$. Therefore, as the set $(0,1)\cap \mathbb{Q}$ is countable, we conclude, using Pfanzagl's method, that $U$ is minimal sufficient. Hence, since $T$ is sufficient, we conclude that there exists a measurable function $f:\{0,1\}\to \mathcal{X}$ such that for every $\theta\in (0,1)$, we have $U=f(T)$ $P_\theta$-a.e., implying that $U=f(T)$, because $P_\theta (\{i\})>0$ for any $\theta\in (0,1)$ and $i\in \mathcal{X}$.

Therefore, $1=U(1)=f(\mathbf{1}_{\{1,2\}}(1))=f(\mathbf{1}_{\{1,2\}}(2))=U(2)=2$, a contradiction.
\end{counterexample}

The preceding counterexample shows that Theorem 1.4.4 in \cite{Pfanzagl1994}, which corresponds to Criterion \ref{crit:2.6}, does not hold as stated. The gap in its proof lies in the application of an unjustified argument from Theorem 1.4.2 of that work. We now outline Pfanzagl's argument to identify where the error occurs, adopting our notation instead of that of the original text.

Let $(\mathcal{X},\Sigma,\{P_\theta\}_{\theta\in\Theta })$ be a statistical model and $\mu :\Sigma\to\overline{\mathbb{R}}$ a $\sigma$-finite measure such that $P_\theta \ll \mu $ for every $\theta\in\Theta$. Suppose that $(\mathcal{X},\Sigma )$ is countably generated. Let $(\mathcal{T},\Sigma _\mathcal{T})$ be a standard Borel space and $T:\mathcal{X}\to \mathcal{T}$ a measurable function. For each $\theta\in\Theta$, let $f_\theta^\mu :\mathcal{X}\to \mathbb{R}$ be a density of $P_\theta$ w.r.t. $\mu$. We assume the following conditions: (a) for each $\theta\in\Theta$, we have $f_\theta^\mu =g_\theta^\mu (T)h^\mu$, where $h^\mu:\mathcal{X}\to \mathbb{R}$ and $g_\theta^\mu :\mathcal{T}\to \mathbb{R}$ are two non-negative measurable functions; and (b) there exists a subset $\{\theta_n\}_{n\in\mathbb{N}}\subseteq\Theta$ such that, for any $t_1,t_2\in \mathcal{T}$ satisfying $g_{\theta_n}^\mu (t_1)=g_{\theta_n}^\mu (t_2)$ for every $n\in\mathbb{N}$, we have $t_1=t_2$.

Define $H^\mu :\mathcal{T}\to \mathbb{R}^\mathbb{N}$ by $H^\mu (t):=(g_{\theta _n}^\mu(t))_{n\in\mathbb{N}}$. In the proof of Theorem 1.4.4, Pfanzagl asserts that the statistic $T^{\mu }:\mathcal{X}\to \mathbb{R}^\mathbb{N}$ given by $T^\mu (x):=H^\mu (T(x))$ is minimal sufficient, invoking the proof of Theorem 1.4.2. However, the proof of Theorem 1.4.2 establishes only the existence of a probability measure $\mathbb{P}:\Sigma \to \mathbb{R}$ such that the statistic $T^\mathbb{P}:\mathcal{X}\to \mathbb{R}^\mathbb{N}$ given by $T^\mathbb{P}(x):=H^\mathbb{P}(T(x))$ is minimal sufficient, where $H^\mathbb{P}:\mathcal{T}\to \mathbb{R}^\mathbb{N}$ is defined by $H^\mathbb{P}(t):=(g_{\theta _n}^\mathbb{P}(t))_{n\in\mathbb{N}}$ with components $g_{\theta _n}^\mathbb{P}$ chosen such that the composition $g_{\theta _n}^\mathbb{P}(T)$ is a density of $P_{\theta _n}$ w.r.t. $\mathbb{P}$. Thus, the argument in Theorem 1.4.2 is purely existential; it constructs a specific collection of measurable functions yielding a minimal sufficient statistic, and does not imply that an arbitrary pre-specified collection, such as $\{ g_{\theta _n}^\mu \}_{n\in\mathbb{N}}$, will produce a minimal sufficient statistic of the form $H(T(x))$. Moreover, Counterexample \ref{ex:3.2} demonstrates that this gap cannot be closed without further assumptions.

We conclude this section with a simple observation. Since the counting measure on a non-empty finite set, normalized by its cardinality, is a probability measure, we may multiply the densities in Counterexample~\ref{ex:3.2} by $4$; this shows that Criterion~\ref{crit:2.6} remains false even if we additionally require $\mu$ to be a probability measure equivalent to $\{P_\theta\}_{\theta\in\Theta}$ (i.e., $\mu(E)=0$ if and only if $P_\theta(E)=0$ for every $\theta\in\Theta$).

\section{Corrected and Generalized Methods}
\label{sec:methods}

This section introduces version-robust and correct criteria for identifying minimal sufficient statistics, together with examples illustrating their application. The main idea is to replace pointwise likelihood comparisons over the whole parameter space by arguments based on countable subfamilies or verifiable approximation hypotheses. These criteria are rigorously proved in Section \ref{sec:proofs}.

These methods are particularly useful in settings in which pointwise likelihood comparisons are delicate but still informative, such as symmetry models, models with parameter-dependent support, and models on analytic Borel sample spaces. In particular, Examples \ref{ex:4.1}--\ref{ex:4.4} illustrate Method \ref{meth:3.1}, Example \ref{ex:4.5} illustrates Method \ref{meth:3.2}, and Example \ref{ex:4.6} illustrates Method \ref{meth:3.3}.

\begin{method}
\label{meth:3.1}
Let $(\mathcal{X},\Sigma ,\{P_\theta\}_{\theta\in\Theta })$ be a statistical model and $\mu:\Sigma\to\overline{\mathbb{R}}$ a $\sigma$-finite measure such that $P_\theta \ll \mu$ for every $\theta\in\Theta$. Suppose that $(\mathcal{X},\Sigma )$ is an analytic Borel space. For each $\theta\in\Theta$, let $f_\theta :\mathcal{X}\to \mathbb{R}$ be a density of $P_\theta $ w.r.t. $\mu$. Let $(\mathcal{T},\Sigma _\mathcal{T})$ be a separable measurable space and $T:\mathcal{X}\to \mathcal{T}$ a measurable function. For any $x\in \mathcal{X}$ and $\Theta'\subseteq\Theta$, define
$$D(x,\Theta '):=\big\{y\in \mathcal{X}:(\exists h_{xy}\in (0,\infty ))(\forall \theta \in \Theta ')\big(f_\theta (y)=f_\theta (x)h_{xy}\big)\big\}.$$
If $T$ is sufficient and there exists a non-empty countable subset $\Theta _0\subseteq\Theta$ such that, for any $x,y\in\mathcal{X}$ satisfying $y\in D(x,\Theta _0)$, we have $T(x)=T(y)$, then $T$ is minimal sufficient.
\end{method}

Counterexample \ref{ex:3.1} shows that if one permits $\Theta_0$ to be uncountable, then pointwise statements involving the densities $\{dP_\theta/d\mu\}_{\theta\in\Theta_0}$ become vulnerable to version dependence: one can choose versions in a $\theta$-dependent way on $\mu$-null sets and thereby alter the induced proportionality relation. In contrast, Method \ref{meth:3.1} avoids this pitfall by restricting to a \emph{countable} subfamily $\Theta_0$, which enables the selection of versions that are consistent simultaneously for all $\theta\in\Theta_0$ outside a single $\mu$-null set.

The following examples illustrate the application of Method \ref{meth:3.1}. The minimal sufficient statistic of Example \ref{ex:4.1} appears in \citet{Thomas2021}.

\begin{example}
\label{ex:4.1}
Let $\Theta$ be the set of all probability densities $f:\mathbb{R}\to \mathbb{R}$ (w.r.t. the Lebesgue measure) such that $f(x)=f(-x)$ for every $x\in\mathbb{R}$. Let $X_1,\cdots, X_n$ be a random sample with density $f\in \Theta $. Then $p_f:\mathbb{R}^n\to \mathbb{R}$ given by $p_f(x_1,\cdots, x_n):=\prod _{i=1}^nf(x_i)$ is a joint density of $(X_1,\cdots,X_n)$. Since $f$ is symmetric at $0$ for every $f\in \Theta$, we have $p_f(x_1,\cdots, x_n)=\prod _{i=1}^nf(|x_i|)$ for every $f\in \Theta$, allowing us to conclude, using the Neyman-Fisher Factorization Theorem, that the statistic $(|X|_{(1)},\cdots,|X|_{(n)})$ is sufficient, where $|X|_{(i)}$ denotes the $i$-th order statistic of $(|X_1|,\cdots, |X_n|)$. Next, we will prove that $(|X|_{(1)},\cdots,|X|_{(n)})$ is also minimal.

For each $\alpha >0$, define $f_\alpha:\mathbb{R}\to \mathbb{R}$ by $f_\alpha (x):=\frac{1}{\pi\alpha (1+(x/\alpha )^2)}$. That is, $f_\alpha $ is a density function of a random variable with distribution $\text{Cauchy}(0,\alpha )$. Hence, $f_\alpha \in \Theta$ for every $\alpha >0$. Define $\Theta _0:=\{f_\alpha :\alpha \in \mathbb{Q}_+\}$, where $\mathbb{Q}_+:=\mathbb{Q}\cap (0,\infty )$. Then $\Theta _0$ is a countable subset of $\Theta$. For every $x\in\mathbb{R}^n$, define $D(x ,\Theta _0):=\big\{y\in \mathbb{R}^n:(\exists h_{xy}\in (0,\infty ))(\forall f\in \Theta_0 )\big(p_f(y)=p_f (x)h_{xy}\big)\big\}$.

Let $x:=(x_1,\cdots, x_n),y:=(y_1,\cdots, y_n)\in\mathbb{R}^n$ be arbitrary such that $y\in D(x,\Theta _0)\,\,(i)$. Because of Method \ref{meth:3.1}, to prove that $(|X|_{(1)},\cdots,|X|_{(n)})$ is minimal, it suffices to prove that $(|x|_{(1)},\cdots, |x|_{(n)})=(|y|_{(1)},\cdots, |y|_{(n)})$.

From $(i)$, we know that there exists $h_{xy}>0$ such that $p_{f_\alpha }(y)=p_{f_\alpha }(x)h_{xy}$ for every $\alpha \in\mathbb{Q}_+$. Hence, for each $\alpha \in \mathbb{Q}_+$, we have $\prod _{k=1}^n\frac{1}{\pi\alpha (1+(y_k/\alpha )^2)}=h_{xy}\prod _{k=1}^n\frac{1}{\pi\alpha (1+(x_k/\alpha )^2)}$, implying that $P(\alpha )=Q(\alpha )\,\,(ii)$ for every $\alpha \in \mathbb{Q}_+$, where $P,Q:\mathbb{R}\to \mathbb{R}$ are the polynomials given by $P(\alpha ):=\prod _{k=1}^n(\alpha ^2+x_k^2)$ and $Q(\alpha ):=h_{xy}\prod _{k=1}^n(\alpha ^2+y_k^2)$. Since $P,Q$ are even and continuous and $\mathbb{Q}_+$ is dense in $[0,\infty )$, we conclude, using $(ii)$, that $P=Q$, allowing us to infer that $P$ and $Q$ have the same roots. Hence, $(|x|_{(1)},\cdots, |x|_{(n)})=(|y|_{(1)},\cdots, |y|_{(n)})$, since $\{\pm ix_k\}_{k=1}^n$ and $\{\pm iy_k\}_{k=1}^n$ are the sets that contain all roots of $P$ and $Q$, respectively, with $i$ denoting the complex number that satisfies $i^2=-1$.
\end{example}

\begin{example}
\label{ex:4.2}
Let $f:\mathbb{R}\to (0,\infty )$ be an integrable function. For each $\theta\in\mathbb{R}$, define $c(\theta ):=1/\int_\theta ^\infty f(x)dx$. Let $X_1,\cdots,X_n$ be a random sample with density $p_\theta (x)=c(\theta )f(x)$, for $x>\theta $ and $p_\theta (x)=0$, otherwise. We will prove that the order statistic $X_{(1)}=\min (X_1,\cdots, X_n)$ is minimal sufficient. The function $f_\theta :\mathbb{R}^n\to \mathbb{R}$ given by
$$f_{\theta }(x_1,\cdots,x_n):=(c(\theta ))^nf(x_1)\cdots f(x_n)\mathbf{1}_{(\theta ,\infty )}(x_{(1)})\,\,(i)$$
is a joint density of $(X_1,\cdots, X_n)$ w.r.t. the Lebesgue measure on $\mathbb{R}^n$. Using the Neyman-Fisher Factorization Theorem, we infer that $X_{(1)}$ is sufficient. Define $\Theta _0:=\mathbb{Q}$. Then $\Theta _0\subseteq\mathbb{R}$ is countable. For each $x\in\mathbb{R}^n$, define
$$D(x,\Theta _0):=\big\{y\in\mathbb{R}^n:(\exists h_{xy}\in (0,\infty ))(\forall \theta \in \Theta _0)\big(f_\theta (y)=f_\theta (x)h_{xy}\big)\big\}.$$
Let $x,y\in\mathbb{R}^n$ be arbitrary elements such that $y\in D(x,\Theta _0)\,\,(ii)$. Because of Method \ref{meth:3.1}, to prove that $X_{(1)}$ is minimal sufficient, it suffices to prove that $x_{(1)}=y_{(1)}$. From $(ii)$, we know that there exists a finite constant $h_{xy}>0$ such that $f_\theta (y)=f_\theta (x)h_{xy}\,\,(iii)$ for every $\theta\in \Theta _0$. Suppose, by contradiction, that $x_{(1)}\neq y_{(1)}$. Then $x_{(1)}<y_{(1)}$ or $y_{(1)}<x_{(1)}$. Suppose that $x_{(1)}<y_{(1)}$. Then there exists a rational number $\theta _0\in \Theta _0:=\mathbb{Q}$ such that $x_{(1)}<\theta _0<y_{(1)}\,\,(iv)$. From $(iii)$, we know that $f_{\theta_0} (y)=f_{\theta_0} (x)h_{xy}$, which implies, using $(i)$ and $(iv)$, that $f_{\theta _0}(y)=f_{\theta _0}(x)h_{xy}=0$, since $f_{\theta _0}(x)=0$. However, this is a contradiction since $f_{\theta _0}(y)>0$. Therefore, the assumption that $x_{(1)}<y_{(1)}$ is false. Analogously, we can conclude that $y_{(1)}<x_{(1)}$ does not hold, implying that $x_{(1)}=y_{(1)}$.
\end{example}

\begin{example}
\label{ex:4.3}
Let $X_1,\cdots, X_n$ be a random sample with density $p_\theta $ given by
$$p_\theta (x)=\left(\frac{2}{\pi }\right)^{1/2}\exp \left\{-\frac{(x-\theta )^2}{2}\right\}\mathbf{1}_{[\theta ,\infty )}(x),\,\,\,\,\theta \in \mathbb{R}.$$
We will prove that $(\overline X,X_{(1)})$ is a minimal sufficient statistic. The function $f_\theta :\mathbb{R}^n\to \mathbb{R}$ given by $f_\theta (x_1,\cdots, x_n)=(2/\pi )^{n/2}e^{n\theta \overline x-n\theta ^2/2}e^{-(\sum _{i=1}^nx_i^2)/2}\mathbf{1}_{[\theta ,\infty )}(x_{(1)})\,\,(i)$ is a joint density of $(X_1,\cdots, X_n)$ w.r.t. the Lebesgue measure on $\mathbb{R}^n$. Using the Neyman-Fisher Factorization Theorem, we conclude that $(\overline X,X_{(1)})$ is a sufficient statistic. Define $\Theta _0:=\mathbb{Q}$. Then $\Theta _0\subseteq\mathbb{R}$ is countable. For each $x\in\mathbb{R}^n$, define
$$D(x,\Theta _0):=\big\{y\in\mathbb{R}^n:(\exists h_{xy}\in (0,\infty ))(\forall \theta \in \Theta _0)\big(f_\theta (y)=f_\theta (x)h_{xy}\big)\big\}.$$
Let $x,y\in\mathbb{R}^n$ be arbitrary elements such that $y\in D(x,\Theta _0)\,\,(ii)$. Because of Method \ref{meth:3.1}, to prove that $(\overline X,X_{(1)})$ is minimal sufficient, it suffices to prove that $(\overline x,x_{(1)})=(\overline y,y_{(1)})$. From $(ii)$, we know that there exists a finite constant $h_{xy}>0$ such that $f_\theta (y)=f_\theta (x)h_{xy}$ for every $\theta\in\Theta _0$. Therefore, using $(i)$, we conclude that for every $\theta\in\Theta _0$, we have
$$e^{n\theta \overline y }e^{-(\sum _{i=1}^ny_i^2)/2}\mathbf{1}_{[\theta ,\infty )}(y_{(1)})=e^{n\theta \overline x }e^{-(\sum _{i=1}^nx_i^2)/2}\mathbf{1}_{[\theta ,\infty )}(x_{(1)})h_{xy}\,\,(iii).$$
Repeating an argument from Example \ref{ex:4.2}, we can conclude that $x_{(1)}=y_{(1)}$, which implies, using $(iii)$, that, for each $\theta\in\Theta _0\cap (-\infty ,x_{(1)}]$, we have $e^{n\theta (\overline y-\overline x) }=e^{-(\sum _{i=1}^nx_i^2)/2}h_{xy}e^{(\sum _{i=1}^ny_i^2)/2}$. Since the previous equality holds for every $\theta\in\Theta _0\cap (-\infty ,x_{(1)}]$ and its right side does not depend on $\theta$, we can conclude that $\overline y-\overline x=0$ and, consequently, that $\overline x=\overline y$. Hence, $(\overline x,x_{(1)})=(\overline y,y_{(1)})$.
\end{example}

\begin{example}
\label{ex:4.4}
Let $(X_1,Y_1),\cdots, (X_n,Y_n)$ be a random sample with density $p_\theta (x,y)=2/\theta ^2$, for $x>0$, $y>0$, and $x+y<\theta$, and $p_\theta (x,y)=0$, otherwise, with $\theta >0$. Define $X:=(X_1,\cdots, X_n)$ and $Y:=(Y_1,\cdots, Y_n)$. We will prove that the order statistic $(X+Y)_{(n)}$ is minimal sufficient. The function $f_\theta :((0,\infty )^2)^n\to \mathbb{R}$ given by
$$f_\theta ((x_1,y_1),\cdots, (x_n,y_n)):=2^n\theta^{-2n}\mathbf{1}_{(0,\infty )^2}(x_{(1)},y_{(1)})\mathbf{1}_{(-\infty ,\theta )}((x+y)_{(n)})\,\,(i)$$
is a joint density of $((X_1,Y_1),\cdots, (X_n,Y_n))$ w.r.t. the Lebesgue measure on $((0,\infty )^2)^n$. Using Propositions 8.1.2, 8.1.3, and 8.1.7 of \cite{Cohn2013}, we can conclude that the product $((0,\infty )^2)^n$ of $n$ copies of $(0,\infty )^2$ together with its Borel $\sigma$-algebra is a standard Borel space. Using $(i)$ and the Neyman-Fisher Factorization Theorem, we can conclude that the statistic $(X+Y)_{(n)}$ is sufficient. Define $\Theta _0:=\mathbb{Q}\cap (0,\infty )$. Then $\Theta _0\subseteq (0,\infty )$ is countable. For each $a\in ((0,\infty )^2)^n$, define $D(a,\Theta _0):=\big\{b\in((0,\infty )^2)^n:(\exists h_{ab}\in (0,\infty ))(\forall \theta \in \Theta _0)\big(f_\theta (b)=f_\theta (a)h_{ab}\big)\big\}$.
Let $a:=((x_1,y_1),\cdots,(x_n,y_n)),b:=((z_1,w_1),\cdots,(z_n,w_n))\in ((0,\infty )^2)^n$ be arbitrary such that $b\in D(a,\Theta _0)\,\,(ii)$. Because of Method \ref{meth:3.1}, to prove that $(X+Y)_{(n)}$ is minimal sufficient, it suffices to prove that $(x+y)_{(n)}=(z+w)_{(n)}$, where $x:=(x_1,\cdots, x_n)$ and $y,z,w$ are defined likewise. From $(ii)$, we know that there exists a finite constant $h_{ab}>0$ such that $f_\theta (b)=f_\theta (a)h_{ab}$ for every $\theta\in\Theta _0$. Then, using $(i)$, we can conclude that, for each $\theta\in\Theta _0$, we have
$$\mathbf{1}_{(-\infty ,\theta )}((z+w)_{(n)})=\mathbf{1}_{(-\infty ,\theta )}((x+y)_{(n)})h_{ab}\,\,(iii).$$
Suppose, by contradiction, that $(x+y)_{(n)}<(w+z)_{(n)}$. Then, using $(iii)$, we obtain $0=h_{ab}$ for every $\theta \in ((x+y)_{(n)},(z+w)_{(n)})\cap \Theta _0$, a contradiction. Hence, $(w+z)_{(n)}\leq (x+y)_{(n)}$. Analogously, we can conclude that $(x+y)_{(n)}\leq (w+z)_{(n)}$, implying that $(x+y)_{(n)}=(z+w)_{(n)}$.
\end{example}

Recall that, in Method \ref{meth:3.1}, the implication $y \in D(x,\Theta_0)\ \Longrightarrow\ T(x)=T(y)$ must hold for all $x,y \in \mathcal{X}$. This makes the method not directly applicable to some statistics, as the following example shows.
\begin{example} Let $(X_1,X_2)$ be a vector with density $f_\theta (x_1,x_2)= (4/\pi) \theta^3 x_1^2 x_2^2 \exp(-\theta(x_1^2 + x_2^2))$, for $x_1, x_2\in \mathbb{R}$, and $\theta>0$. By the Neyman-Fisher Factorization Theorem, $T(X_1,X_2) = X_1^2+X_2^2$ is sufficient. Note that $f_\theta(x_1,x_2)= 0$ whenever $x_1 x_2=0$. Take, for instance, $x=(0,2)$ and $y=(0,1)$. Then 
$T(x)=4 \neq T(y)=1$, but, as $f_\theta(x)=f_\theta(y)=0$ for every $\theta>0$, we have, for any countable set $\Theta_0\subset (0,\infty)$, that $y \in D(x,\Theta_0)$. Therefore, Method \ref{meth:3.1} cannot be applied directly on $T$, since the required implication  ``$y \in D(x, \Theta_0)$ implies $T(x) = T(y)$'' fails. However, we can define a pointwise modified statistic which is equal to $T$ $P_\theta$-a.e. for each $\theta>0$. For example, let $\tilde T(x_1,x_2) = T(x_1, x_2)$, if $x_1x_2 \neq 0$, and $\tilde T(x_1,x_2) = 1$, otherwise. Now let $\Theta _0:=\mathbb{Q}\cap (0,\infty )$ and assume that $y \in D(x, \Theta_0)$. Then there exists
$h_{xy} \in (0, \infty)$ such that
$f_\theta(y)=f_\theta(x)\,h_{xy}$, $\forall \theta\in\Theta_0$.

There are two cases, namely, (1) if $x_1x_2=0$, then $f_\theta(x)=0$ for every $\theta\in\Theta_0$. Hence $f_\theta(y)=0$ for every $\theta\in\Theta_0$ as well, so necessarily $y_1y_2=0$. Therefore, $\tilde T(x)=1=\tilde T(y)$; and (2) if $x_1x_2\neq0$, then $f_\theta(x)>0$ for every $\theta\in\Theta_0$. Since $y \in D(x,\Theta_0)$ and $h_{xy}>0$, we must also have $y_1y_2\neq0$. In this case,
\[
h_{xy} = \frac{f_\theta(y)}{f_\theta(x)}
= \frac{y_1^2 y_2^2}{x_1^2 x_2^2}
\exp\big(-\theta(\tilde T(y)-\tilde T(x))\big),
\qquad \forall \theta\in\Theta_0.
\]
Because $h_{xy}$ does not depend on $\theta$ and $\Theta_0$ contains infinitely many distinct values, we must have $\tilde T(x) = \tilde T(y)$. Thus the implication required in Method \ref{meth:3.1} holds for $\tilde T$. Since
$\tilde T=T$ $P_\theta$-a.e. for every $\theta>0$, $\tilde T$ is also sufficient.
Hence, by Method \ref{meth:3.1}, $\tilde T$ is minimal sufficient. Moreover, since
$T=\tilde T$ $P_\theta$-almost surely for every $\theta>0$, $T$ is also minimal sufficient.
\end{example}

The following method is a generalization of the one introduced by \cite{Sato1996}, whose original proof was restricted to Euclidean spaces. Its formal proof is deferred to Section \ref{sec:proofs}.

\begin{method}[Sato]
\label{meth:3.2}
Let $(\mathcal{X},\Sigma ,\{P_\theta\}_{\theta\in\Theta })$ be a statistical model and $\mu:\Sigma\to\overline{\mathbb{R}}$ a $\sigma$-finite measure such that $P_\theta \ll \mu$ for every $\theta\in\Theta$. Suppose that $(\mathcal{X},\Sigma )$ is an analytic Borel space, $(\mathcal{T},\Sigma_\mathcal{T})$ is a separable measurable space, and $T:\mathcal{X}\to\mathcal{T}$ is a measurable function. For each $\theta\in\Theta$, let $f_\theta :\mathcal{X}\to \mathbb{R}$ be a density of $P_\theta $ w.r.t. $\mu$. Suppose that there exists a non-empty countable subset $\Theta _0\subseteq\Theta$ such that, for each $\theta\in\Theta$, there exists a sequence $(\theta _n)_{n\in\mathbb{N}}$ of $\Theta _0$ such that the limit $\lim_{n\to\infty}f_{\theta_n}$ exists in $\mathbb{R}$ $\mu$-a.e. and for every $x\in\mathcal{X}$ we have
$$f_\theta (x)=\begin{cases}\lim_{n\to \infty}f_{\theta _n}(x),&\text{if the limit exists}\\0,&\text{otherwise}\end{cases}.$$
Define $D(x):=\big\{y\in \mathcal{X}:(\exists h_{xy}\in (0,\infty ))(\forall \theta \in \Theta )\big(f_\theta (y)=f_\theta (x)h_{xy}\big)\big\}$. If for any $x,y\in\mathcal{X}$, we have $T(x)=T(y)$ iff $y\in D(x)$, then $T$ is minimal sufficient.
\end{method}

This method states that, under the approximation condition above, the usual likelihood-ratio characterization of minimal sufficiency becomes valid: $T$ is minimal sufficient if, for any sample points $x,y\in\mathcal{X}$, we have $T(x)=T(y)$ iff there exists a finite constant $h_{xy}>0$ independent of $\theta $ such that $f_\theta (y)=f_\theta (x)h_{xy}$ for every $\theta\in\Theta$.

Note that the latter approach highlights the necessary conditions under which Criterion \ref{crit:2.1} introduced in Section~\ref{sec:intro}  becomes valid. It is particularly useful when $\Theta$ is a subset of $\mathbb{R}^n$ and the densities $f_\theta$ are continuous w.r.t. $\theta$, but it is difficult to apply in cases such as Example~\ref{ex:4.1}. The following example illustrates its application.

\begin{example}
\label{ex:4.5}
Let $X_1,\cdots, X_n\sim \text{Cauchy}(\theta ,1),\theta\in\mathbb{R}$ be a random sample. The function $f_\theta :\mathbb{R}^n\to \mathbb{R}$ given by $f_\theta (x_1,\cdots, x_n):=\pi ^{-n}\prod_{i=1}^n\frac{1}{1+(x_i-\theta )^2}$ is a joint density of $(X_1,\cdots, X_n)$ w.r.t. the Lebesgue measure on $\mathbb{R}^n$. We will show that $(X_{(1)},X_{(2)},\cdots, X_{(n)})$ is a minimal sufficient statistic, where $X_{(k)}$ denotes the $k$-th order statistic. Since the function $\mathbb{R}\to \mathbb{R},\,\theta \mapsto f_\theta (x)$ is continuous for every $x\in\mathbb{R}^n$ and the set of rational numbers $\mathbb{Q}$ is countable and dense in $\mathbb{R}$, the conditions of Method \ref{meth:3.2} are satisfied. Define
$$D(x ):=\big\{y\in \mathbb{R}^n:(\exists h_{xy}\in (0,\infty ))(\forall \theta \in \mathbb{R})\big(f_\theta (y)=f_\theta (x)h_{xy}\big)\big\}$$
for every $x\in\mathbb{R}^n$. Since $h/f_\theta (x)$ is a polynomial in $\theta$ for every $h>0$ and $x:=(x_1,\cdots,x_n)\in\mathbb{R}^n$ with complex roots $x_j\pm i,j\in \{1,\cdots,n\}$, as two identical polynomials have the same roots, it is straightforward to show that, for any two sample points $x,y\in\mathbb{R}^n$, we have $(x_{(1)},\cdots, x_{(n)})=(y_{(1)},\cdots, y_{(n)})$ iff $y\in D(x)$. Hence, using Method \ref{meth:3.2}, we infer that $(X_{(1)},X_{(2)},\cdots, X_{(n)})$ is minimal sufficient.
\end{example}
We conclude this section with a method for exponential models, based on Proposition 1.6.9 in \cite{Pfanzagl1994}. Pfanzagl's original proof invokes his minimality criterion (cf. Criterion~\ref{crit:2.6}), which we showed to be false without extra assumptions in Section~\ref{sec:counterexamples}. We provide a complete proof of the following method in Section~\ref{sec:proofs}; our version holds under slightly stronger hypotheses than Pfanzagl's original formulation.

\begin{method}
\label{meth:3.3}
Let $(\mathcal{X},\Sigma ,\{P_\theta\}_{\theta\in\Theta })$ be a statistical model and $\mu:\Sigma\to\overline{\mathbb{R}}$ a $\sigma$-finite measure such that $P_\theta \ll \mu$ for every $\theta\in\Theta$. Suppose that $(\mathcal{X},\Sigma )$ is an analytic Borel space. For each $\theta\in\Theta$, let $f_\theta :\mathcal{X}\to \mathbb{R}$ be a density of $P_\theta $ w.r.t. $\mu$. Suppose that there exist functions $\eta _1,\cdots,\eta _k,B:\Theta \to \mathbb{R}$ and measurable functions $T_1,\cdots ,T_k,h:\mathcal{X}\to \mathbb{R}$ such that, for each $\theta\in\Theta$, we have
$$f_\theta (x)=\exp \big[\big(\sum _{i=1}^k\eta_i (\theta )T_i(x)\big)-B(\theta )\big]h(x)\quad \mu\text{-a.e.}$$
If for any $a_0,\cdots,a_k\in\mathbb{R}$ satisfying $\sum _{i=1}^ka_i\eta_i (\theta )=a_0$ for every $\theta\in\Theta$, we have $a_i=0$ for every $i\in \{0,\cdots, k\}$, then $T:\mathcal{X}\to \mathbb{R}^k$ given by $T(x):=(T_1(x),\cdots, T_k(x))$ is a minimal sufficient statistic.
\end{method}

The following examples illustrate the application of this method.

\begin{example}
\label{ex:4.6}
Let $X_1,\cdots ,X_n\sim N(\theta ,k\theta^2),\theta >0$ be a random sample, where $k>0$ is a known constant. We will prove that $(\sum _{i=1}^nX_i,\sum _{i=1}^nX_i^2)$ is a minimal sufficient statistic. The function $f_\theta:\mathbb{R}^n\to \mathbb{R}$ given by $$f_\theta (x_1,\cdots, x_n):=\frac{e^{-n/(2k)}}{(2\pi k)^{n/2}}\exp \left\{\frac{1}{k\theta }\left(\sum _{i=1}^nx_i\right)-\frac{1}{2k\theta ^2}\left(\sum _{i=1}^nx_i^2\right)-n\log (\theta )\right\}$$ is a joint density of $(X_1,\cdots, X_n)$ w.r.t. the Lebesgue measure on $\mathbb{R}^n$.

For each $\theta>0$, define $\eta _1(\theta ):=1/(k\theta )$ and $\eta _2(\theta ):=-1/(2k\theta ^2)$. Let $a,b,c\in\mathbb{R}$ be arbitrary elements such that $a\eta _1(\theta )+b\eta _2(\theta )=c\,\,(i)$ for every $\theta>0$. Because of Method \ref{meth:3.3}, to prove that $(\sum _{i=1}^nX_i,\sum _{i=1}^nX_i^2)$ is minimal sufficient, it suffices to prove that $a=b=c=0$. From $(i)$, we know that $2kc\theta ^2-2a\theta +b=0$ for every $\theta >0$, allowing us to conclude, noting that this is a polynomial with degree at most $2$ in $\theta$, that $2kc=-2a=b=0$ and, hence, that $a=b=c=0$.
\end{example}

\section{Proofs}
\label{sec:proofs}

Before proving the main results, it is necessary to establish some preliminaries. 

Let $(\mathcal{X},\Sigma,\{P_\theta\}_{\theta\in\Theta})$ be a statistical model. Whenever we write $f:\mathcal{X}\to(\mathcal{Y},\Sigma _\mathcal{Y})$, we mean that $(\mathcal{Y},\Sigma _\mathcal{Y})$ is a measurable space and that $f:\mathcal{X}\to\mathcal{Y}$ is $(\Sigma,\Sigma _\mathcal{Y})$-measurable. Moreover, when $f: \mathcal{X} \to \mathcal{Y}$ is $(\mathfrak{B},\Sigma _\mathcal{Y})$-measurable, where $\mathfrak{B}$ is a sub-sigma-algebra of $\Sigma$, we say that $f$ is $\mathfrak{B}$-measurable.

Let $\mathcal{X}$ be a set and $\{\mathfrak{A} _i\}_{i\in I}$ a collection of subsets of $2^\mathcal{X}$. We denote by $\vee_{i\in I}\mathfrak{A}_i$ the $\sigma$-algebra on $\mathcal{X}$ generated by the union $\cup_{i\in I}\mathfrak{A}_i$. Furthermore, we denote $\vee_{i\in I}\mathfrak{A}_i$ by $\mathfrak{A} _1\vee \cdots \vee \mathfrak{A} _n$ if $I=\{1,\cdots, n\}$.

Let $\mathcal{E}:=(\mathcal{X},\Sigma,\{P_\theta\}_{\theta\in\Theta})$ be a statistical model and $\mathfrak{A}$ a sub-$\sigma$-algebra of $\Sigma$. We denote 

$$\mathfrak{N}_\mathcal{E}:=\big\{N\in \Sigma :(\forall \theta\in\Theta )\big(P_\theta (N)=0\big)\big\}.$$

That is, $\mathfrak{N}_\mathcal{E}$ is the set of all $N\in \Sigma$ that are null with respect to every probability measure in the statistical model $\mathcal{E}$. Moreover, we denote $\overline{\mathfrak{A}}^\mathcal{E}:=\mathfrak{A}\vee \mathfrak{N}_\mathcal{E}$ for every sub-$\sigma$-algebra $\mathfrak{A}\subseteq\Sigma$. Hence, $\overline{\mathfrak{A}}^\mathcal{E}$ is the $\sigma$-algebra on $\mathcal{X}$ generated by the union $\mathfrak{A}\cup \mathfrak{N}_\mathcal{E}$. Repeating the argument in Lemma 4.1.3 of \cite{Doberkat2015}, we obtain 

$$\overline{\mathfrak{A}}^\mathcal{E}=\big\{A\vartriangle N:A\in\mathfrak{A}\wedge N\in\mathfrak{N}_\mathcal{E}\big\}.$$

Therefore, given two sub-$\sigma$-algebras $\mathfrak{A}$ and $\mathfrak{B}$ of $\Sigma$, we have $\mathfrak{A}\subseteq\overline{\mathfrak{B}}^\mathcal{E}$ iff for each $A\in\mathfrak{A}$ there exists $B\in\mathfrak{B}$ such that $P_\theta (A\vartriangle B)=0$ for every $\theta\in\Theta$.

\begin{proposition}
\label{prop:4.1}
Let $\mathcal{E}:=(\mathcal{X},\Sigma,\{P_\theta\}_{\theta\in\Theta})$ be a statistical model, $\mathfrak{A}$ a sub-$\sigma$-algebra of $\Sigma$, and $(\mathcal{Y},\Sigma_\mathcal{Y})$ a standard Borel space. If $f:\mathcal{X}\to \mathcal{Y}$ is $\overline{\mathfrak{A}}^\mathcal{E}$-measurable, then there exists a $\mathfrak{A}$-measurable function $f_0:\mathcal{X}\to\mathcal{Y}$ such that, for every $\theta\in\Theta$, we have $f=f_0$ $P_\theta$-a.e.
\end{proposition}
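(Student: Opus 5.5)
The plan is to reduce to real-valued functions using the Borel isomorphism theorem, and then to use the usual simple-function approximation argument. A standard Borel space $\mathcal{Y}$ is Borel isomorphic to a Borel subset of $\mathbb{R}$ (a countable set or $[0,1]$ suffices). Let $\phi:\mathcal{Y}\to B\subseteq\mathbb{R}$ be such an isomorphism. Then $g:=\phi\circ f$ is a real-valued $\overline{\mathfrak{A}}^\mathcal{E}$-measurable function. Suppose we can find an $\mathfrak{A}$-measurable $g_0:\mathcal{X}\to\mathbb{R}$ with $g=g_0$ $P_\theta$-a.e. for every $\theta$. Then we define
\[
f_0:=\phi^{-1}\circ g_0 \ \text{on } \{g_0\in B\}, \qquad f_0:=y_0 \ \text{otherwise},
\]
for a fixed $y_0\in\mathcal{Y}$ (if $\mathcal{Y}=\emptyset$ then $\mathcal{X}=\emptyset$ and the statement is trivial). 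Since $B$ is Borel, $\{g_0\in B\}\in\mathfrak{A}$, so $f_0$ is $\mathfrak{A}$-measurable. Moreover, $f_0=f$ wherever $g_0=g$, because $g$ takes values in $B$.

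For the real-valued case, I would first treat indicators. Let $E\in\overline{\mathfrak{A}}^\mathcal{E}$. By the description $\overline{\mathfrak{A}}^\mathcal{E}=\{A\vartriangle N\}$ recalled just before the proposition, $E=A\vartriangle N$ with $A\in\mathfrak{A}$ and $N\in\mathfrak{N}_\mathcal{E}$. Hence $\mathbf{1}_E=\mathbf{1}_A$ outside $N$, which is a set that is $P_\theta$-null for every $\theta$ simultaneously. By linearity, every $\overline{\mathfrak{A}}^\mathcal{E}$-simple function then agrees with some $\mathfrak{A}$-simple function outside a single set in $\mathfrak{N}_\mathcal{E}$.

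For a general $g$, I take $\overline{\mathfrak{A}}^\mathcal{E}$-simple functions $s_n\to g$ pointwise. For each $n$ I choose $\mathfrak{A}$-simple $t_n$ with $s_n=t_n$ off $N_n\in\mathfrak{N}_\mathcal{E}$. Set $N:=\bigcup_n N_n$; this is still in $\mathfrak{N}_\mathcal{E}$ because the union is countable. Then define
\[
g_0:=\limsup_n t_n \ \text{on } C, \qquad g_0:=0 \ \text{elsewhere},
\]
where $C:=\{x:\limsup_n t_n(x)\in\mathbb{R}\}$. The set $C$ lies in $\mathfrak{A}$, so $g_0$ is $\mathfrak{A}$-measurable. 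Off $N$ we have $t_n=s_n\to g$, so $g_0=g$ off $N$, and therefore $P_\theta$-a.e. for every $\theta$.

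The main point needing care is that the exceptional set must be uniform in $\theta$. Since $\Theta$ may be uncountable, we cannot pick a separate null set for each $P_\theta$ and take their union. This is avoided because every exceptional set is taken from $\mathfrak{N}_\mathcal{E}$ and only countably many of them are combined. The other delicate step is the measurable choice of $f_0$ off the range of $\phi$, which is handled by Borel-measurability of $B$. Here I rely on the standard Kuratowski theorem that standard Borel spaces are isomorphic to Borel subsets of $\mathbb{R}$; that theorem is precisely why the hypothesis that $\mathcal{Y}$ is standard Borel is needed.
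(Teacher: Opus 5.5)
Your proof is correct. It differs from the paper's only in that the paper gives no argument of its own and simply cites Lemma 1.10.3 of \cite{Pfanzagl1994}.

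You instead give a self-contained proof in three steps. First, a Borel isomorphism $\phi$ of $\mathcal{Y}$ onto a Borel set $B\subseteq\mathbb{R}$ reduces the problem to real-valued functions. Second, the description $\overline{\mathfrak{A}}^\mathcal{E}=\{A\vartriangle N: A\in\mathfrak{A},\ N\in\mathfrak{N}_\mathcal{E}\}$ handles indicators and simple functions. Third, taking a countable union of exceptional sets from $\mathfrak{N}_\mathcal{E}$ gives one exceptional set, valid for every $\theta$, for the pointwise limit.

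Your route makes explicit the two points the citation leaves implicit:
\begin{enumerate}
\item The exceptional set is uniform in $\theta$ because only countably many members of $\mathfrak{N}_\mathcal{E}$ are ever combined, even though $\Theta$ may be uncountable.
\item The standard Borel hypothesis is used only through the Borelness of $B$. That is what puts $\{g_0\in B\}$ in $\mathfrak{A}$, so that $f_0$ can be defined $\mathfrak{A}$-measurably.
\end{enumerate}

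The second point also shows where your argument stops. The paper mentions, right after the proposition, an extension to separable codomains. There one would replace $\phi$ by an injective $\alpha$ with $\Sigma_\mathcal{Y}=\alpha^{-1}(\mathfrak{B}_\mathbb{R})$, as in Corollary~\ref{cor_doob}. But $\alpha[\mathcal{Y}]$ need not be Borel, so your last step would need an additional idea. The citation gains brevity and gives up this transparency.
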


\begin{proof}
    See Lemma 1.10.3, p. 56, in \cite{Pfanzagl1994}
\end{proof}

The previous proposition can be generalized to separable measurable spaces, as can be seen in \citet[Proposition 2.1.4]{Cavalcante2026}.

Let $\mathcal{M}_1$, $\mathcal{M}_2$  and $\mathcal{M}$ be collections of measures on a set $(\mathcal{X},\Sigma )$. We denote $\mathcal{M}_1\ll \mathcal{M}_2$ iff for every $E\in \Sigma $ satisfying $(\forall \nu \in\mathcal{M}_2)(\nu (E)=0)$ we have $(\forall \mu \in\mathcal{M}_1)(\mu (E)=0)$. We also denote $\mathcal{M}_1\equiv \mathcal{M}_2$ iff $\mathcal{M}_1\ll \mathcal{M}_2$ and $\mathcal{M}_2\ll \mathcal{M}_1$. Moreover, we denote $\mathcal{E}\ll \mathcal{M}$ (resp. $\mathcal{E}\equiv \mathcal{M}$) iff $\{P_\theta \}_{\theta \in \Theta }\ll \mathcal{M}$ (resp. $\{P_\theta \}_{\theta \in \Theta }\equiv \mathcal{M}$), where $\mathcal{E}$ is the statistical model. If $\mathcal{M}=\{\mu\}$ is a singleton, we denote $\mathcal{E}\ll \mathcal{M}$ and $\mathcal{E}\equiv \mathcal{M}$ simply by $\mathcal{E}\ll \mu $ and $\mathcal{E}\equiv \mu $, respectively.

\begin{proposition}
\label{prop:4.2}
Let $\mathcal{E}:=(\mathcal{X},\Sigma,\{P_\theta\}_{\theta\in\Theta})$ be a statistical model and $\mu \!:\!\Sigma\to\overline{\mathbb{R}}$ a $\sigma$-finite measure. If $\mathcal{E}\ll \mu $, then there exists a countable subset $\{\theta _n\}_{n\in\mathbb{N}}\subseteq\Theta$ such that for every sequence $(c_n)_{n\in\mathbb{N}}$ of $(0,\infty )$ satisfying $\sum _{n\in\mathbb{N}}c_n=1$, we have that $P:\Sigma\to \mathbb{R}$ given by ${P}:=\sum _{n\in\mathbb{N}}c_nP_{\theta _n}$ is a probability measure such that $\mathcal{E}\equiv {P}$, $P\ll \mu $, and 

$$\frac{d{P}}{d\mu }=\sum _{n\in\mathbb{N}}c_n \frac{dP_{\theta _n}}{d\mu } \,\,\mu\text{-a.e.}$$
\end{proposition}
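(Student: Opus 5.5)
This is the classical Halmos--Savage lemma, and I would prove it by the standard exhaustion argument. Two reductions come first. Since $\mu$ is $\sigma$-finite, fix a partition $\mathcal{X}=\bigcup_k A_k$ with $0<\mu(A_k)<\infty$ (discarding null pieces) and set $\nu:=\sum_k 2^{-k}\mu(\cdot\cap A_k)/\mu(A_k)$. Then $\nu$ is a probability measure with $\nu\equiv\mu$, so every $P_\theta\ll\nu$. It therefore suffices to exhibit the countable subset and check the final claims against $\mu$ afterwards. Next, for each $\theta$ fix a version $f_\theta\ge 0$ of $dP_\theta/d\mu$ and put $S_\theta:=\{f_\theta>0\}$. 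Note that $P_\theta(E)=0$ iff $\mu(E\cap S_\theta)=0$.

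The key step is to find countably many $\theta_n$ with $\mathcal{E}\ll\{P_{\theta_n}\}$. Let $\mathcal{C}$ be the class of sets $C=\bigcup_{n}S_{\theta_n}$ over countable subfamilies, and let $s:=\sup_{C\in\mathcal{C}}\nu(C)\le 1$. Pick $C_j\in\mathcal{C}$ with $\nu(C_j)\to s$. Then $C^*:=\bigcup_j C_j\in\mathcal{C}$ is again a countable union and attains $\nu(C^*)=s$; enumerate its indices as $\{\theta_n\}_{n\in\mathbb{N}}$, repeating if the family is finite. I claim every $\theta$ satisfies $\mu(S_\theta\setminus C^*)=0$. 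Otherwise $C^*\cup S_\theta\in\mathcal{C}$ would have $\nu$-measure exceeding $s$, since $\nu\equiv\mu$. Now let $E\in\Sigma$ with $P_{\theta_n}(E)=0$ for all $n$. Then $\mu(E\cap S_{\theta_n})=0$ for all $n$, so $\mu(E\cap C^*)=0$. Hence $\mu(E\cap S_\theta)\le\mu(E\cap C^*)+\mu(S_\theta\setminus C^*)=0$, i.e.\ $P_\theta(E)=0$. This exhaustion step is the only non-routine part, and the care needed is in working with the finite measure $\nu$ so that the supremum is finite and attained.

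Finally, take any $(c_n)\subset(0,\infty)$ with $\sum c_n=1$. Then $P:=\sum_n c_nP_{\theta_n}$ is countably additive, being a nonnegative series of measures, and $P(\mathcal{X})=1$. If $P(E)=0$ then each $P_{\theta_n}(E)=0$ because $c_n>0$, so $P_\theta(E)=0$ for all $\theta$ by the previous paragraph; thus $\mathcal{E}\ll P$. Conversely, if $P_\theta(E)=0$ for all $\theta$, then in particular $P(E)=0$. Hence $\mathcal{E}\equiv P$. Moreover $P\ll\mu$ since each $P_{\theta_n}\ll\mu$. For the density, set $g:=\sum_n c_n f_{\theta_n}$, a measurable $[0,\infty]$-valued function. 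By monotone convergence, $\int_E g\,d\mu=\sum_n c_n P_{\theta_n}(E)=P(E)$ for all $E\in\Sigma$. Taking $E=\mathcal{X}$ gives $g<\infty$ $\mu$-a.e. By the $\mu$-a.e.\ uniqueness of Radon--Nikodym derivatives ($\mu$ is $\sigma$-finite), $dP/d\mu=\sum_n c_n\, dP_{\theta_n}/d\mu$ $\mu$-a.e. This holds for any choice of versions, since changing versions alters each term only on a $\mu$-null set, and a countable union of such sets is null.
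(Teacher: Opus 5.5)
Your proof is correct and follows the paper's route. The paper simply cites Lemma~7 of Halmos--Savage (1949) for the countable subfamily $\{\theta_n\}$ with $\mathcal{E}\equiv\{P_{\theta_n}\}_{n\in\mathbb{N}}$, which you prove directly by the standard exhaustion argument against the finite measure $\nu\equiv\mu$; your remaining checks ($P$ is a probability measure, $\mathcal{E}\equiv P$, $P\ll\mu$, and the density formula via $\mu$-a.e.\ uniqueness of Radon--Nikodym derivatives) are the same as the paper's.
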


\begin{proof}
Suppose that $\mathcal{E}\ll \mu $. From Lemma 7 in \cite{Halmos1949}, we know that there exists a countable subset $\{\theta _n\}_{n\in\mathbb{N}}\subseteq\Theta $ such that $\mathcal{E}\equiv \{P_{\theta _n}\}_{n\in\mathbb{N}}$. Let $(c_n)_{n\in\mathbb{N}}$ be an arbitrary sequence of $(0,\infty)$ such that $\sum _{n\in\mathbb{N}}c_n=1$. Define $P:\Sigma\to \mathbb{R}$ by ${P}:=\sum _{n\in\mathbb{N}}c_nP_{\theta _n}$. Then $P$ is a probability measure. Moreover, we have ${P}\ll \{P_{\theta _n}\}_{n\in\mathbb{N}}\ll \{P_\theta \}_{\theta \in \Theta }\ll \{P_{\theta _n}\}_{n\in\mathbb{N}}\ll {P}$, implying that $\mathcal{E}\equiv {P}$.

By hypothesis, $\mathcal{E}\ll \mu $; therefore, $\{P_{\theta _n}\}_{n\in\mathbb{N}}\ll \mu $ and ${P}\ll \mu $. Hence, given the almost everywhere uniqueness of a Radon-Nikodym derivative, it is direct to verify that $\frac{d{P}}{d\mu }=\sum _{n\in\mathbb{N}}c_n \frac{dP_{\theta _n}}{d\mu }$ $\mu$-a.e.
\end{proof}

Let $\mu ,\nu $ be two measures on $(\mathcal{X},\Sigma )$ such that $\nu \ll \mu $. We denote by $[d\nu /d\mu ]$ the set of all Radon-Nikodym derivatives of $\nu$ w.r.t. $\mu$. Define $d_\mathcal{E}:\Theta\times \Theta \to \mathbb{R}$, the total variation distance, by $d_\mathcal{E}(\theta _1,\theta _2):=\sup _{E\in\Sigma }|P_{\theta _1}(E)-P_{\theta _2}(E)|.$ 

\begin{proposition}
\label{prop:4.3}
Let $\mathcal{E}:=(\mathcal{X},\Sigma,\{P_\theta\}_{\theta\in\Theta})$ be a statistical model, $\theta_1,\theta _2\in\Theta$, and $\mu:\Sigma\to\overline{\mathbb{R}}$ a $\sigma$-finite measure such that $\mathcal{E}\ll \mu $. If $f_{\theta _1}\in [dP_{\theta _1}/d\mu  ]$ and $f_{\theta _2}\in [dP_{\theta _2}/d\mu ]$, then

$$d_\mathcal{E}(\theta _1,\theta _2)=\frac{1}{2}\int _\mathcal{X}|f_{\theta _1}-f_{\theta _2}|d\mu. $$
\end{proposition}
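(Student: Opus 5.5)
The plan is the standard Scheffé argument: rewrite the supremum over events as an integral against $\mu$, maximise that integral explicitly, and use that both $f_{\theta_1}$ and $f_{\theta_2}$ integrate to $1$.

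Write $g:=f_{\theta_1}-f_{\theta_2}$. Since $f_{\theta_i}$ is a Radon--Nikodym derivative of the probability $P_{\theta_i}$, it is non-negative $\mu$-a.e. and $\int_\mathcal{X} f_{\theta_i}\,d\mu=1$. Hence $g$ is $\mu$-integrable, and for every $E\in\Sigma$
\[
P_{\theta_1}(E)-P_{\theta_2}(E)=\int_E g\,d\mu .
\]
Nothing here depends on which versions are chosen, because changing $f_{\theta_i}$ on a $\mu$-null set does not change any of these integrals.

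Set $A:=\{x\in\mathcal{X}: g(x)>0\}$, which lies in $\Sigma$ because $g$ is measurable. Since $\int_\mathcal{X} g\,d\mu=1-1=0$, we get $\int_A g\,d\mu=-\int_{A^c} g\,d\mu$. Therefore
\[
\int_\mathcal{X}|g|\,d\mu=\int_A g\,d\mu-\int_{A^c}g\,d\mu=2\int_A g\,d\mu .
\]

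Now fix any $E\in\Sigma$. On one side,
\[
\int_E g\,d\mu\le\int_{E\cap A}g\,d\mu\le\int_A g\,d\mu .
\]
On the other side,
\[
\int_E g\,d\mu\ge\int_{E\cap A^c}g\,d\mu\ge\int_{A^c}g\,d\mu=-\int_A g\,d\mu .
\]
Together these give $|P_{\theta_1}(E)-P_{\theta_2}(E)|\le\int_A g\,d\mu$. Equality holds at $E=A$, so the supremum is attained, and
\[
d_\mathcal{E}(\theta_1,\theta_2)=\int_A g\,d\mu=\tfrac12\int_\mathcal{X}|f_{\theta_1}-f_{\theta_2}|\,d\mu .
\]

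There is no real obstacle in this argument. The only point needing care is that each $\int f_{\theta_i}\,d\mu=1$ is finite, which justifies subtracting the two integrals; $\sigma$-finiteness of $\mu$ is used only to guarantee that the Radon--Nikodym derivatives exist.
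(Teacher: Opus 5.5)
Your argument is correct and complete. The paper gives no proof of its own and simply cites Lemma 2.4 of Strasser (1985); your Scheffé-type argument using the set $A=\{f_{\theta_1}>f_{\theta_2}\}$ is the standard proof of that lemma, and, as you essentially observe, $\sigma$-finiteness of $\mu$ plays no role once the two derivatives are given.
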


\begin{proof}
    See Lemma 2.4 in \cite{Strasser1985}.
\end{proof}

We denote by $\tau \langle \mathcal{E}\rangle $ the coarsest topology on $\Theta$ that makes the functions $\Phi _E:\Theta \to \mathbb{R}$ given by $\Phi _E(\theta ):=P_\theta (E)$ continuous for every $E \in \Sigma$. Hence, $\tau \langle \mathcal{E}\rangle $ is the initial topology on $\Theta $ induced by the collection $\{\Phi _E:\Theta \to \mathbb{R}\}_{E\in \Sigma }$.

Let $(\mathcal{X},d)$ be a pseudo-metric space. We denote by $\tau (d)$ the topology on $\mathcal{X}$ whose elements are arbitrary unions of open balls defined by $d$. That is, $\tau (d)$ is the topology on $\mathcal{X}$ induced by the pseudo-metric $d$.

\begin{proposition}
\label{prop:4.4}
Let $\mathcal{E}:=(\mathcal{X},\Sigma,\{P_\theta\}_{\theta\in\Theta})$ be a statistical model. Then 

$$\tau \langle \mathcal{E}\rangle \subseteq \tau (d_\mathcal{E}).$$

\end{proposition}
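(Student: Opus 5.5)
To prove $\tau\langle\mathcal{E}\rangle\subseteq\tau(d_\mathcal{E})$, I will use the defining property of an initial topology. Since $\tau\langle\mathcal{E}\rangle$ is the coarsest topology on $\Theta$ making every $\Phi_E$ continuous, it is enough to show that each $\Phi_E:\Theta\to\mathbb{R}$, $\Phi_E(\theta)=P_\theta(E)$, is continuous when $\Theta$ carries $\tau(d_\mathcal{E})$. Once that holds, $\tau(d_\mathcal{E})$ is one of the topologies making all $\Phi_E$ continuous, and it therefore contains the coarsest such topology.

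The key step is a Lipschitz estimate. For any $E\in\Sigma$ and $\theta_1,\theta_2\in\Theta$, the definition of $d_\mathcal{E}$ as a supremum over $\Sigma$ gives directly
$$|\Phi_E(\theta_1)-\Phi_E(\theta_2)|=|P_{\theta_1}(E)-P_{\theta_2}(E)|\le \sup_{F\in\Sigma}|P_{\theta_1}(F)-P_{\theta_2}(F)|=d_\mathcal{E}(\theta_1,\theta_2).$$
So $\Phi_E$ is $1$-Lipschitz from the pseudo-metric space $(\Theta,d_\mathcal{E})$ to $\mathbb{R}$. Proposition~\ref{prop:4.3} is not needed here.

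It remains to turn the Lipschitz bound into topological continuity in the pseudo-metric setting. Let $U\subseteq\mathbb{R}$ be open and $\theta\in\Phi_E^{-1}(U)$. Choose $\varepsilon>0$ with $(\Phi_E(\theta)-\varepsilon,\Phi_E(\theta)+\varepsilon)\subseteq U$. By the estimate, the open $d_\mathcal{E}$-ball of radius $\varepsilon$ about $\theta$ lies in $\Phi_E^{-1}(U)$. Hence $\Phi_E^{-1}(U)$ is a union of open balls, so it belongs to $\tau(d_\mathcal{E})$ by the definition of $\tau(d)$. One also checks that $d_\mathcal{E}$ is a genuine pseudo-metric (finite, symmetric, triangle inequality via the supremum), so that $\tau(d_\mathcal{E})$ is indeed a topology.

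I expect no real obstacle. The only point requiring care is invoking the minimality property of the initial topology correctly. Equivalently, the subbasic sets $\Phi_E^{-1}(U)$ generate $\tau\langle\mathcal{E}\rangle$, and all of them lie in $\tau(d_\mathcal{E})$.
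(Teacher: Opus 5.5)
Your proposal is correct and follows the paper's proof: you reduce to $\tau(d_\mathcal{E})$-continuity of each $\Phi_E$ via the minimality of the initial topology, and you get that continuity from the bound $|P_{\theta_1}(E)-P_{\theta_2}(E)|\le d_\mathcal{E}(\theta_1,\theta_2)$, which is immediate from the definition of the total variation distance. Your extra details, namely turning the $1$-Lipschitz bound into openness of preimages and checking that $d_\mathcal{E}$ is a pseudo-metric, only make explicit what the paper leaves implicit.
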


\begin{proof}
For each $E\in\Sigma$, define $\Phi _E:\Theta\to \mathbb{R}$ by $\Phi _E(\theta):=P_\theta (E)$. To conclude the proof, it suffices to prove that $\Phi _E$ is $\tau (d_\mathcal{E})$-continuous. This follows from the observation that for any $E\in \Sigma$ and $\theta,\theta _0\in \Theta$ we have $|\Phi_E(\theta )-\Phi _E(\theta _0)|=|P_\theta (E)-P_{\theta _0}(E)|\leq d_\mathcal{E}(\theta ,\theta _0)$, by definition of the total variation distance.
\end{proof}

An immediate consequence of the previous proposition is the next corollary.

\begin{corollary}
\label{cor:4.1}
Let $\mathcal{E}:=(\mathcal{X},\Sigma,\{P_\theta\}_{\theta\in\Theta})$ be a statistical model and $\Theta _0\subseteq\Theta $. If $\Theta _0$ is dense in $(\Theta ,d_\mathcal{E})$, then $\Theta _0$ is dense in $(\Theta, \tau \langle \mathcal{E}\rangle )$.
\end{corollary}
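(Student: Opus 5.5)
The corollary follows from Proposition~\ref{prop:4.4} by a one-line topological argument: density is inherited when passing to a coarser topology. The plan is to unpack the definition of density in terms of open sets and then use the inclusion $\tau\langle\mathcal{E}\rangle\subseteq\tau(d_\mathcal{E})$.

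Concretely, I would use the characterization that $\Theta_0$ is dense in a topological space $(\Theta,\tau)$ iff every non-empty $U\in\tau$ satisfies $U\cap\Theta_0\neq\emptyset$. For a pseudo-metric space, density of $\Theta_0$ in $(\Theta,d_\mathcal{E})$ means density in $(\Theta,\tau(d_\mathcal{E}))$. Equivalently, every open ball $B_{d_\mathcal{E}}(\theta,\varepsilon)$ meets $\Theta_0$, and since every non-empty element of $\tau(d_\mathcal{E})$ is a union of open balls, every non-empty $d_\mathcal{E}$-open set meets $\Theta_0$.

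Now take an arbitrary non-empty $U\in\tau\langle\mathcal{E}\rangle$. By Proposition~\ref{prop:4.4}, $U\in\tau(d_\mathcal{E})$, so $U\cap\Theta_0\neq\emptyset$ by the previous step. As $U$ was arbitrary, $\Theta_0$ is dense in $(\Theta,\tau\langle\mathcal{E}\rangle)$.

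There is no real obstacle. The only point needing care is that $d_\mathcal{E}$ is merely a pseudo-metric, since distinct parameters may induce the same law. This does not affect the argument, because $\tau(d_\mathcal{E})$ is defined directly as the topology generated by the open balls, and density is a purely topological notion.
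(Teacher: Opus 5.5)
Your proof is correct and follows the paper's route: the paper treats this as an immediate consequence of Proposition~\ref{prop:4.4}, namely that density passes from $\tau(d_\mathcal{E})$ to the coarser topology $\tau\langle\mathcal{E}\rangle$. You have simply written out the one-line topological step, and your remark about $d_\mathcal{E}$ being only a pseudo-metric is accurate but harmless.
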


Before stating the next proposition, we  extend the notion of (minimal) sufficiency to $\sigma$-algebras. Let $(\mathcal{X},\Sigma ,\{P_\theta \}_{\theta\in\Theta })$ be a statistical model. A sub-$\sigma$-algebra $\mathfrak{A}$ of $\Sigma$ is said to be {\bf($\mathcal{E}$-)sufficient} (or {\bf$\{P_\theta\}_{\theta\in\Theta}$-sufficient}) iff for each $E\in\Sigma$, there exists a $\mathfrak{A}$-measurable function $\kappa _E:\mathcal{X}\to \mathbb{R}$ such that for every $\theta\in\Theta$ we have $P\!_\theta (E|\mathfrak{A})=\kappa _E$ $P_\theta$-a.e. Furthermore, $\mathfrak{A}$ is said to be {\bf minimal ($\mathcal{E}$-)sufficient} (or {\bf minimal $\{P_\theta\}_{\theta\in\Theta}$-sufficient}) iff $\mathfrak{A}$ is $\mathcal{E}$-sufficient and, given any $\mathcal{E}$-sufficient sub-$\sigma$-algebra $\mathfrak{B}$ of $\Sigma$, we have $\mathfrak{A}\subseteq \overline{\mathfrak{B}}^\mathcal{E}$.

There exists a connection between sufficient statistics and sufficient $\sigma$-algebras. Before explicating this relation, we require the following lemma.

\begin{proposition}[Doob-Dynkin Lemma]
\label{doob_lemma}
Let $\mathcal{X}$ be a set, $(\mathcal{Y},\Sigma _\mathcal{Y})$ and $(\mathcal{Z},\Sigma _\mathcal{Z})$ two measurable spaces, and $f:\mathcal{X}\to \mathcal{Y}$ and $g:\mathcal{X}\to \mathcal{Z}$ two functions. If $(\mathcal{Z},\Sigma _\mathcal{Z})$ is a standard Borel space, then the following statements are \underline{equivalent}:
    \begin{enumerate}
        \item $\sigma (g)\subseteq \sigma (f)$;
        \item There exists a measurable function $h:\mathcal{Y}\to \mathcal{Z}$ such that $g=h\circ f$.
    \end{enumerate}
\end{proposition}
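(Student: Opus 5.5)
The implication from 2 to 1 is immediate. If $g=h\circ f$ with $h$ measurable, then for every $B\in\Sigma_\mathcal{Z}$ we have $g^{-1}(B)=f^{-1}(h^{-1}(B))$. Since $h^{-1}(B)\in\Sigma_\mathcal{Y}$, this set lies in $\sigma(f)=\{f^{-1}(C):C\in\Sigma_\mathcal{Y}\}$, and so $\sigma(g)\subseteq\sigma(f)$. No hypothesis on $\mathcal{Z}$ is needed for this direction.

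For the implication from 1 to 2, we first reduce to the case $\mathcal{Z}\subseteq\mathbb{R}$. A standard Borel space is Borel isomorphic to a Borel subset of $\mathbb{R}$: either a countable set or all of $\mathbb{R}$ (Definition \ref{def:stand_Borel} and Kuratowski's theorem). So fix a Borel isomorphism $\varphi:\mathcal{Z}\to Z_0\subseteq\mathbb{R}$ with $Z_0$ Borel, and set $\tilde g:=\varphi\circ g$. Then $\sigma(\tilde g)=\sigma(g)\subseteq\sigma(f)$. Suppose we find a measurable $\tilde h:\mathcal{Y}\to\mathbb{R}$ with $\tilde g=\tilde h\circ f$. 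We then need a map into $Z_0$, which we obtain by modifying $\tilde h$ off the image: choose $z_0\in Z_0$ (if $\mathcal{Z}=\emptyset$ then $\mathcal{X}=\emptyset$ and the statement is trivial) and put $h_0:=\tilde h$ on the Borel set $\tilde h^{-1}(Z_0)$ and $h_0:=z_0$ elsewhere. The map $h_0$ is measurable into $Z_0$. Moreover $h_0\circ f=\tilde g$, because $\tilde h(f(x))=\tilde g(x)\in Z_0$ for every $x$. Finally $h:=\varphi^{-1}\circ h_0$ is measurable and satisfies $h\circ f=g$.

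The real-valued case is the classical approximation argument. First take a simple $\tilde g=\sum_{i}a_i\mathbf{1}_{A_i}$ with finitely many distinct values and $A_i=\tilde g^{-1}(\{a_i\})\in\sigma(f)$. Write $A_i=f^{-1}(C_i)$ with $C_i\in\Sigma_\mathcal{Y}$; then $h:=\sum_i a_i\mathbf{1}_{C_i}$ is measurable and $h\circ f=\tilde g$, since the $A_i$ partition $\mathcal{X}$. Overlaps among the $C_i$ off $f(\mathcal{X})$ are harmless, but to be safe one can disjointify them and use $C_i\setminus\bigcup_{j<i}C_j$. For general real $\tilde g$, take $\sigma(\tilde g)$-measurable simple functions $\tilde g_n\to\tilde g$ pointwise, for example dyadic truncations $\tilde g_n=\psi_n\circ\tilde g$. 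These are $\sigma(f)$-measurable, so $\tilde g_n=h_n\circ f$ with $h_n$ measurable. Define $\tilde h(y):=\lim_n h_n(y)$ on the set $L:=\{y:\lim_n h_n(y)\text{ exists in }\mathbb{R}\}$, which belongs to $\Sigma_\mathcal{Y}$, and $\tilde h:=0$ elsewhere. For every $x\in\mathcal{X}$ we have $f(x)\in L$ and $\tilde h(f(x))=\lim_n\tilde g_n(x)=\tilde g(x)$.

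The main obstacle is the measurability bookkeeping. The image $f(\mathcal{X})$ need not belong to $\Sigma_\mathcal{Y}$, so the construction has to define $h$ on all of $\mathcal{Y}$ through measurable sets such as $L$ and $\tilde h^{-1}(Z_0)$ rather than through $f(\mathcal{X})$. It also has to keep the values inside $Z_0$, which the redefinition by $z_0$ takes care of. The standard Borel hypothesis is used only to obtain the embedding $\varphi$.
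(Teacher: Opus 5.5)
Your proof is correct and is essentially the standard argument behind Lemma 1.14 of \cite{Kallenberg2021}, which is all the paper gives here: embed $\mathcal{Z}$ as a Borel subset of the reals, treat indicators and simple functions via $A=f^{-1}(C)$, pass to a limit, and redefine $h$ on a measurable set so that it takes values in the image. Kallenberg works with $[0,1]$, monotone $0\le g_n\uparrow g$ and $h=\sup_n h_n$, where you use the convergence set $L$; this difference is inessential. One small correction: when $\mathcal{Z}=\emptyset$ the statement is not trivial but false whenever $\mathcal{Y}\neq\emptyset$, because no map $\mathcal{Y}\to\emptyset$ exists, so nonemptiness of $\mathcal{Z}$ must be taken as an implicit hypothesis, as the paper and Kallenberg tacitly do.
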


\begin{proof}
See Lemma 1.14, p. 18, in \cite{Kallenberg2021}
\end{proof}

Using the Doob-Dynkin Lemma, it is straightforward to show that a statistic $T:\mathcal{X}\to (\mathcal{T},\Sigma _\mathcal{T})$ is $\mathcal{E}$-sufficient iff $\sigma (T)$ is $\mathcal{E}$-sufficient. This equivalence generally does not extend to minimal sufficiency; as demonstrated in Examples~9.7 and 9.8 of \cite{Heyer1982}, $T$ may be minimal sufficient while $\sigma (T)$ is not, and conversely. The following proposition establishes a sufficient assumption on the statistic's codomain under which this discrepancy disappears in one direction: if the codomain is standard Borel (see Definition \ref{def:stand_Borel}), then minimality at the $\sigma$-algebra level lifts to minimality at the statistic level.

\begin{proposition}
\label{prop:4.5}
Let $\mathcal{E}:=(\mathcal{X},\Sigma,\{P_\theta\}_{\theta\in\Theta})$ be a statistical model and $T:\mathcal{X}\to (\mathcal{T},\Sigma _\mathcal{T})$. Suppose that $(\mathcal{T},\Sigma _\mathcal{T})$ is a standard Borel space. If $\sigma (T)$ is minimal sufficient, then $T$ is minimal sufficient.
\end{proposition}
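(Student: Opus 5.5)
Sufficiency of $T$ comes first. I would use the equivalence noted just before the statement, itself a consequence of the Doob--Dynkin Lemma: $T$ is $\mathcal{E}$-sufficient iff $\sigma(T)$ is $\mathcal{E}$-sufficient. Since $\sigma(T)$ is minimal sufficient, it is in particular sufficient, so $T$ is sufficient.

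For minimality, let $S:\mathcal{X}\to(\mathcal{S},\Sigma_\mathcal{S})$ be an arbitrary sufficient statistic. We need a measurable $u:\mathcal{S}\to\mathcal{T}$ with $T=u\circ S$ $P_\theta$-a.e. for every $\theta\in\Theta$.
\begin{enumerate}
\item By the same equivalence, $\sigma(S)$ is a sufficient sub-$\sigma$-algebra of $\Sigma$.
\item Minimality of $\sigma(T)$ then gives $\sigma(T)\subseteq\overline{\sigma(S)}^{\mathcal{E}}$, i.e., $T$ is $\overline{\sigma(S)}^{\mathcal{E}}$-measurable as a map into $\mathcal{T}$.
\item Because $(\mathcal{T},\Sigma_\mathcal{T})$ is standard Borel, Proposition~\ref{prop:4.1} applies with $\mathfrak{A}=\sigma(S)$. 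It yields a $\sigma(S)$-measurable $T_0:\mathcal{X}\to\mathcal{T}$ with $T=T_0$ $P_\theta$-a.e. for every $\theta$.
\item Since $T_0$ is $\sigma(S)$-measurable, $\sigma(T_0)\subseteq\sigma(S)$.
\item The Doob--Dynkin Lemma (Proposition~\ref{doob_lemma}), with $f=S$, $g=T_0$ and $\mathcal{Z}=\mathcal{T}$ standard Borel, then gives a measurable $u:\mathcal{S}\to\mathcal{T}$ with $T_0=u\circ S$ everywhere.
\item Hence $T=u\circ S$ $P_\theta$-a.e. for every $\theta$, which is the required factorization.
\end{enumerate}

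The main point needing care is the sufficiency equivalence between $S$ and $\sigma(S)$ when the codomain of $S$ is arbitrary, not standard Borel. The direction actually needed here is statistic $\Rightarrow$ $\sigma$-algebra. I expect it to be immediate, because sufficiency of a statistic is defined via conditional probabilities given $\sigma(S)$, which makes it essentially the same as sufficiency of $\sigma(S)$. Even so, I would check it against Definition~\ref{def:suff} in the appendix.

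The only other delicate step is using Proposition~\ref{prop:4.1} correctly. It requires the target to be standard Borel, which is exactly the hypothesis on $\mathcal{T}$, and it needs the null sets to be simultaneous over all $\theta$. That is guaranteed because $\overline{\cdot}^{\mathcal{E}}$ is formed with $\mathfrak{N}_\mathcal{E}$.
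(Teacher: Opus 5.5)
Your proposal is correct and follows the paper's proof step for step: sufficiency of $T$ from that of $\sigma(T)$, then $\sigma(T)\subseteq\overline{\sigma(S)}^{\mathcal{E}}$ by minimality, Proposition~\ref{prop:4.1} to obtain a $\sigma(S)$-measurable version of $T$, and the Doob--Dynkin Lemma to factor that version through $S$. One small correction: you use both directions of the statistic/$\sigma$-algebra sufficiency equivalence, not only statistic $\Rightarrow$ $\sigma$-algebra, since your first paragraph goes from $\sigma(T)$ to $T$. That direction needs Doob--Dynkin applied only to the real-valued kernels $\kappa_E$, so an arbitrary codomain causes no difficulty there either.
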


\begin{proof}
Suppose that $\sigma (T)$ is minimal sufficient. Then $\sigma (T)$ is sufficient, implying that $T$ is sufficient. Let $S:\mathcal{X}\to (\mathcal{S},\Sigma _\mathcal{S})$ be an arbitrary sufficient statistic. Then $\sigma (S)$ is sufficient, which implies, using the hypothesis, that $\sigma (T)\subseteq \overline{\sigma (S)}^\mathcal{E}$. Therefore, $T$ is $\overline{\sigma (S)}^\mathcal{E}$-measurable, which implies, using Proposition \ref{prop:4.1}, that there exists a $\sigma (S)$-measurable function $g:\mathcal{X}\to \mathcal{T}$ such that, for each $\theta\in\Theta$, we have $T=g$ $P_\theta$-a.e.

Since $g$ is $\sigma (S)$-measurable, we have $\sigma (g)\subseteq \sigma (S)$, which implies, using the Doob-Dynkin Lemma (see Proposition \ref{doob_lemma}), that there exists a $(\Sigma _\mathcal{S},\Sigma _\mathcal{T})$-measurable function $f:\mathcal{S}\to \mathcal{T}$ such that $g=f(S)$, implying that, for each $\theta\in\Theta$, we have $T=f(S)$ $P_\theta$-a.e.
\end{proof}

\begin{proposition}
\label{prop:4.7}
Let $\mathcal{E}:=(\mathcal{X},\Sigma,\{P_\theta\}_{\theta\in\Theta})$ be a statistical model and $(c_n)_{n\in\mathbb{N}}$ a sequence of $(0,\infty)$ such that $\sum _{n\in\mathbb{N}}c_n=1$. If $\{\theta _n\}_{n\in\mathbb{N}}\subseteq\Theta$ is dense in $(\Theta,\tau \langle \mathcal{E}\rangle )$, then the following statements are true:
    \begin{enumerate}[label=\roman*.]
    \item $P:\Sigma \to \mathbb{R}$ given by $P:=\sum _{n\in\mathbb{N}}c_nP_{\theta _n}$ is a probability measure such that $\mathcal{E}\equiv P$;
    \item For each $n\in\mathbb{N}$, choose $f_{\theta _n}\in [dP_{\theta _n}/dP]$. Then $\mathfrak{A}:=\vee _{n\in\mathbb{N}}\sigma (f_{\theta _n})$ is a countably generated minimal $\mathcal{E}$-sufficient sub-$\sigma$-algebra of $\Sigma$.
    \end{enumerate}
\end{proposition}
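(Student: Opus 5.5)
Part (i) follows the pattern of Proposition~\ref{prop:4.2}. $P$ is a probability measure because it is a countable convex combination of probability measures. The relation $P\ll\mathcal{E}$ is immediate, since each $P_{\theta_n}$ belongs to the family. For $\mathcal{E}\ll P$, I would use the density hypothesis. Let $E\in\Sigma$ with $P(E)=0$. Then $P_{\theta_n}(E)=0$ for every $n$, because $c_n>0$. The map $\Phi_E(\theta)=P_\theta(E)$ is $\tau\langle\mathcal{E}\rangle$-continuous by definition of the initial topology, and it vanishes on the dense set $\{\theta_n\}$, so it vanishes on all of $\Theta$. Hence $\mathcal{E}\equiv P$.

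For (ii), sufficiency of $\mathfrak{A}$ comes first. Since $P\equiv\mathcal{E}$, I would invoke the Halmos--Savage characterization: a sub-$\sigma$-algebra $\mathfrak{B}$ is $\mathcal{E}$-sufficient iff every $P_\theta$ has a $\mathfrak{B}$-measurable density w.r.t.\ $P$. By construction, each $f_{\theta_n}$ is $\mathfrak{A}$-measurable.

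The main step is to show that, for every $\theta$, the derivative $dP_\theta/dP$ has an $\mathfrak{A}$-measurable version. I would reduce this to a statement about conditional expectations. Let $g_\theta:=E_P(dP_\theta/dP\mid\mathfrak{A})$. For $A\in\mathfrak{A}$ we have $P_\theta(A)=\int_A g_\theta\,dP$, so $g_\theta$ is a density of $P_\theta|_{\mathfrak{A}}$. What remains is to show $P_\theta(E)=\int_E g_\theta\,dP$ for all $E\in\Sigma$. Write $E_P(\mathbf 1_E\mid\mathfrak{A})=:k_E$. The identity $P_{\theta_n}(E)=\int k_E\, f_{\theta_n}\,dP$ holds because $f_{\theta_n}$ is $\mathfrak{A}$-measurable. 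In other words, $k_E$ serves as a common version of $P_{\theta_n}(E\mid\mathfrak{A})$ for all $n$.

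To pass to arbitrary $\theta$, I plan to exploit continuity of $\theta\mapsto P_\theta(E\cap A)$ and of $\theta\mapsto P_\theta(A)$ in $\tau\langle\mathcal{E}\rangle$. The quantity $\theta\mapsto P_\theta(E\cap A)-\int_A k_E\,dP_\theta$ should vanish on $\{\theta_n\}$, and by density it should then vanish everywhere, provided $\theta\mapsto\int_A k_E\,dP_\theta$ is continuous. That continuity follows by approximating the bounded function $k_E\mathbf 1_A$ uniformly by simple functions, since each $\Phi_F$ is continuous and uniform limits of continuous functions are continuous. This yields $P_\theta(E\mid\mathfrak{A})=k_E$ $P_\theta$-a.e. for all $\theta$, which is sufficiency directly in the paper's definition, with no appeal to Halmos--Savage. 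This density argument is the crux and the step I expect to need the most care.

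Next, $\mathfrak{A}$ is countably generated. Each $\sigma(f_{\theta_n})$ is generated by the countably many sets $\{f_{\theta_n}<q\}$, $q\in\mathbb{Q}$, and a countable union of countable families is countable.

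Finally, minimality. Let $\mathfrak{B}$ be any $\mathcal{E}$-sufficient sub-$\sigma$-algebra. By the easy direction of Halmos--Savage (or factorization) applied with the dominating measure $P=\sum c_nP_{\theta_n}$, there is a $\mathfrak{B}$-measurable version $\tilde f_{\theta_n}$ of $dP_{\theta_n}/dP$. Concretely, take $\tilde f_{\theta_n}=E_P(f_{\theta_n}\mid\mathfrak{B})$, computed using a common version of the conditional probabilities under $\mathfrak{B}$; this is valid because $P$ is a mixture of the $P_{\theta_n}$. Uniqueness of Radon--Nikodym derivatives gives $f_{\theta_n}=\tilde f_{\theta_n}$ $P$-a.e., hence $\mathcal{E}$-a.e. by (i). Therefore each $\sigma(f_{\theta_n})\subseteq\overline{\mathfrak{B}}^{\mathcal{E}}$, because $\{f_{\theta_n}<q\}$ and $\{\tilde f_{\theta_n}<q\}$ differ by a set in $\mathfrak{N}_\mathcal{E}$. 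Taking the join over $n$ gives $\mathfrak{A}\subseteq\overline{\mathfrak{B}}^{\mathcal{E}}$.
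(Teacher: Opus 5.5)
Your proof is correct. It follows the paper's outline, but it proves directly what the paper takes from Strasser and Heyer.

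\textbf{Part (i)} is the paper's argument, which the paper delegates to Strasser's Lemma 4.3.

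\textbf{Sufficiency.} The paper asserts that $f_\theta=\mathbb{E}_P[f_\theta\mid\mathfrak{A}]$ $P$-a.e.\ for \emph{every} $\theta\in\Theta$, i.e.\ that all densities $dP_\theta/dP$ have $\mathfrak{A}$-measurable versions. It then applies the factorization half of Halmos--Savage (Strasser's Lemma 20.6). You instead verify the definition directly. You show that $k_E=\mathbb{E}_P[\mathbf{1}_E\mid\mathfrak{A}]$ is a common version of $P_{\theta_n}(E\mid\mathfrak{A})$, and then extend this to all of $\Theta$ using the $\tau\langle\mathcal{E}\rangle$-continuity of $\theta\mapsto P_\theta(E\cap A)-\int_A k_E\,dP_\theta$. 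The mechanism (continuity plus density) is the same, but you need no Halmos--Savage at this step.

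\textbf{Minimality.} The paper cites Heyer's result that $\vee_{\theta\in\Theta}\sigma(f_\theta)$ is minimal sufficient and uses the inclusion $\mathfrak{A}\subseteq\vee_{\theta\in\Theta}\sigma(f_\theta)$. You show $\sigma(f_{\theta_n})\subseteq\overline{\mathfrak{B}}^{\mathcal{E}}$ directly for each $n$.

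\textbf{What each route gives.} Your route shows that density is used only for $\mathcal{E}\ll P$ and for sufficiency. Minimality of $\mathfrak{A}$ needs only that $P$ is a countable mixture of members of the family with $P\equiv\mathcal{E}$. The paper's route gives in exchange the extra fact that every $dP_\theta/dP$, not just the $dP_{\theta_n}/dP$, has an $\mathfrak{A}$-measurable version.

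\textbf{A correction in labelling.} The implication ``$\mathfrak{B}$ sufficient $\Rightarrow$ $dP_{\theta_n}/dP$ has a $\mathfrak{B}$-measurable version'' is not the easy direction of Halmos--Savage. It is false for a general $P\equiv\mathcal{E}$: for a one-point family the trivial $\sigma$-algebra is sufficient, but $dP_0/dP$ need not be constant. So your opening sentence of (ii) should appeal to the mixture structure of $P$, not merely to equivalence. The step is short here precisely because $P=\sum_n c_nP_{\theta_n}$, and you should write it out. If $\kappa_E$ is a common version of $P_\theta(E\mid\mathfrak{B})$, then $\kappa_E$ is also a version of $P(E\mid\mathfrak{B})$. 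Hence, with $\tilde f_{\theta_n}:=\mathbb{E}_P[f_{\theta_n}\mid\mathfrak{B}]$, one gets $\int_E\tilde f_{\theta_n}\,dP=\int\kappa_E\tilde f_{\theta_n}\,dP=\int\kappa_E\,dP_{\theta_n}=P_{\theta_n}(E)$ for every $E\in\Sigma$.
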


\begin{proof}
Suppose that $\{\theta _n\}_{n\in\mathbb{N}}\subseteq\Theta$ is dense in $(\Theta,\tau \langle \mathcal{E}\rangle )$. Define $P:\Sigma \to \mathbb{R}$ by $P:=\sum _{n\in\mathbb{N}}c_nP_{\theta _n}$. Then $P$ is a probability measure. Repeating an argument from Lemma 4.3 of \cite{Strasser1985}, we conclude that $\mathcal{E}\ll P$, which implies that $\mathcal{E}\equiv P$ since $P\ll \{P_{\theta_n} \}_{n\in\mathbb{N}}$. Hence, $\mathcal{E}$ is dominated.

For each $\theta\in\Theta$, choose $f_\theta \in [dP_\theta /dP]$. Define $\mathfrak{A}:=\vee _{n\in\mathbb{N}}\sigma (f_{\theta _n})$.

Repeating an argument from that lemma, we conclude that $\mathfrak{A}$ is a countably generated sub-$\sigma$-algebra of $\Sigma$  and that $f_\theta =\mathbb{E}_P[f_\theta |\mathfrak{A}]$ $P$-a.e. for every $\theta\in\Theta $, allowing us to conclude that $\mathfrak{A}$ is sufficient \citep[Lemma 20.6]{Strasser1985}. Hence, it remains to prove that $\mathfrak{A}$ is minimal.

From the proof of Theorem 8.8 in \cite{Heyer1982}, we know $\mathfrak{B}:=\vee_{\theta\in\Theta }\sigma (f_\theta )$ is minimal sufficient, which allows us to conclude that $\mathfrak{A}$ is minimal since $\mathfrak{A}\subseteq \mathfrak{B}$.

A more detailed proof can be found in Proposition 2.5.5 of \cite{Cavalcante2026}.
\end{proof}

We now state a result similar to the Doob-Dynkin Lemma that will be employed to prove our proposed methods.

\begin{proposition}
\label{prop:4.8}
Let $(\mathcal{X},\Sigma _\mathcal{X})$, $(\mathcal{Y},\Sigma _\mathcal{Y})$, and $(\mathcal{Z},\Sigma _\mathcal{Z})$ be three measurable spaces and $f:\mathcal{X}\to \mathcal{Y}$ and $g:\mathcal{X}\to \mathcal{Z}$ two measurable functions. Suppose that $(\mathcal{X},\Sigma _\mathcal{X})$ is an analytic Borel space and $(\mathcal{Y},\Sigma _\mathcal{Y})$ is separable. If $(\forall x,y\in \mathcal{X})\big(f(x)=f(y)\Rightarrow g(x)=g(y)\big)$, then $\sigma (g)\subseteq \sigma (f)$.
\end{proposition}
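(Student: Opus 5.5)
We prove the statement through the Blackwell-type separation theorem for analytic spaces, i.e.\ Souslin's separation theorem. Since $(\mathcal{Y},\Sigma_\mathcal{Y})$ is separable, we fix a countable family $\{B_n\}_{n\in\mathbb{N}}\subseteq\Sigma_\mathcal{Y}$ that generates $\Sigma_\mathcal{Y}$ and separates the points of $\mathcal{Y}$. Put $A_n:=f^{-1}(B_n)\in\Sigma_\mathcal{X}$. Then $\sigma(f)=\sigma(\{A_n\}_{n\in\mathbb{N}})$. Define $F:\mathcal{X}\to\{0,1\}^\mathbb{N}$ by $F(x):=(\mathbf{1}_{A_n}(x))_{n\in\mathbb{N}}$. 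This map is measurable, $\sigma(F)=\sigma(f)$, and because $\{B_n\}$ separates points, $F(x)=F(y)$ holds iff $f(x)=f(y)$.

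Now take $C\in\sigma(g)$. We must show $C\in\sigma(F)$. By the hypothesis, $C$ is a union of fibres of $f$, and therefore of fibres of $F$. So $F(C)$ and $F(\mathcal{X}\setminus C)$ are disjoint subsets of the Polish space $\{0,1\}^\mathbb{N}$. Since $C$ is measurable in the analytic Borel space $\mathcal{X}$, both $C$ and $\mathcal{X}\setminus C$ are analytic Borel subspaces. Their images under the Borel map $F$ are therefore analytic subsets of $\{0,1\}^\mathbb{N}$.

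By Souslin's (Lusin's) separation theorem there is a Borel set $E\subseteq\{0,1\}^\mathbb{N}$ with $F(C)\subseteq E$ and $E\cap F(\mathcal{X}\setminus C)=\emptyset$. Hence $C=F^{-1}(E)\in\sigma(F)=\sigma(f)$. Because $C$ was an arbitrary element of $\sigma(g)$, we get $\sigma(g)\subseteq\sigma(f)$. Note that the measurability of $g$ enters only through $C\in\Sigma_\mathcal{X}$, and nothing at all is needed about $\mathcal{Z}$.

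The main obstacle is justifying that $F(C)$ and $F(\mathcal{X}\setminus C)$ are analytic. If analytic Borel spaces are defined (as in Appendix~\ref{sec:definitions}) as spaces isomorphic to an analytic subset of a Polish space with its relative Borel $\sigma$-algebra, then $C$ is a Borel subset of an analytic set, hence itself analytic. Its image under a Borel map into a Polish space is analytic by the standard result in \cite{Srivastava1998}. A further detail is that the countable separating family must be chosen so that $f^{-1}$ of it generates $\sigma(f)$. If the paper's definition of a separable measurable space gives only a countable generating family, we add the observation that a countable generating family of a space whose singletons are measurable automatically separates points, so the same family serves both purposes.
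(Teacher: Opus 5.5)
Your proof is correct and is essentially the paper's argument. The paper observes that $E=g^{-1}[F]$ is a union of the atoms $f^{-1}[\{y\}]$ of the countably generated $\sigma$-algebra $\sigma(f)$, and then cites Blackwell's theorem (Cohn's Theorem 8.6.7). You instead inline the standard proof of that theorem: you encode $\sigma(f)$ via $x\mapsto(\mathbf{1}_{A_n}(x))_{n}$ into $\{0,1\}^{\mathbb{N}}$ and apply the Lusin--Souslin separation theorem.

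One small correction concerns the definition. The appendix defines an analytic Borel space as a separable space admitting a surjective measurable map $\phi$ from a standard Borel space. With that definition, analyticity of $F(C)$ is immediate from $F(C)=(F\circ\phi)(\phi^{-1}[C])$. So you do not need the isomorphism-with-an-analytic-set description, nor even the separability of $\mathcal{X}$.
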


\begin{proof}
Suppose that $(\forall x,y\in \mathcal{X})\big(f(x)=f(y)\Rightarrow g(x)=g(y)\big)$. Then $h:f[\mathcal{X}]\to \mathcal{Z}$ given by $h(f(x)):=g(x)$ is well-defined.

	Let $E\in \sigma (g)$ be an arbitrary element. Then there exists $F\in \Sigma _\mathcal{Z}$ such that $E=g^{-1}[F]$, allowing us to conclude that 

\begin{equation}
\label{eq:4.8.1}
E=(h\circ f)^{-1}[F]=f^{-1}[h^{-1}[F]]=\cup _{y\in h^{-1}[F]}f^{-1}[\{y\}].
\end{equation}

	By hypothesis, $(\mathcal{Y},\Sigma _\mathcal{Y})$ is countably generated; therefore, $\sigma (f)$ is countably generated (since if $\Sigma _\mathcal{Y}$ is generated by $\mathcal{S}\subseteq 2^\mathcal{Y}$, then $\sigma (f)$ is generated by $f^{-1}(\mathcal{S})$). Moreover, it is straightforward to verify that $f^{-1}[\{y\}]$ is an atom of $\sigma (f):=f^{-1}(\Sigma _\mathcal{Y})$ for every $y\in f[\mathcal{X}]$ since, by hypothesis, $\Sigma _\mathcal{Y}$ is separable \citep[see Section 8.6 of][for the definition of an atom]{Cohn2013}.

	As $f$ is measurable, it follows that $\sigma (f)$ is a sub-$\sigma$-algebra of $\Sigma _\mathcal{X}$ and $E\in \Sigma _\mathcal{X}$. Recall that if $(\mathcal{W},\Sigma )$ is a separable measurable space, then $(A,\Sigma |_A)$ is separable for every subset $A\subseteq\mathcal{W}$. 
    Hence, by Proposition 8.6.5 of \cite{Cohn2013}, our definition of an analytic Borel space coincides with the definition of an analytic space given in Section 8.6 of \cite{Cohn2013}. Therefore, using (\ref{eq:4.8.1}) and Theorem 8.6.7 of \cite{Cohn2013}, we conclude $E\in \sigma (f)$.
\end{proof}

\begin{proposition}
\label{prop:4.9}
Let $(\mathcal{X},\Sigma _\mathcal{X})$, $(\mathcal{Y},\Sigma _\mathcal{Y})$, and $(\mathcal{Z},\Sigma _\mathcal{Z})$ be three measurable spaces and $f:\mathcal{X}\to \mathcal{Y}$ and $g:\mathcal{X}\to \mathcal{Z}$ two measurable functions. If $(\mathcal{X},\Sigma _\mathcal{X})$ is an analytic Borel space, $(\mathcal{Y},\Sigma _\mathcal{Y})$ is separable, and $(\mathcal{Z},\Sigma _\mathcal{Z})$ is a standard Borel space, then the following statements are \underline{equivalent}:
    \begin{enumerate}
        \item $\sigma (g)\subseteq \sigma (f)$;
        \item There exists a measurable function $h:\mathcal{Y}\to \mathcal{Z}$ such that $g=h\circ f$;
        \item There exists a (not necessarily measurable) function $\tilde h:\mathcal{Y}\to \mathcal{Z}$ such that $g=\tilde h\circ f$;
        \item $(\forall x,y\in \mathcal{X})\big(f(x)=f(y)\Rightarrow g(x)=g(y)\big)$.
    \end{enumerate}
\end{proposition}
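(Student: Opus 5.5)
I will establish the chain of implications $1\Rightarrow 2\Rightarrow 3\Rightarrow 4\Rightarrow 1$. Each link follows from a result already stated, and no new measure-theoretic construction is needed.

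For $1\Rightarrow 2$, I will invoke the Doob--Dynkin Lemma (Proposition~\ref{doob_lemma}) directly. Its only hypothesis on the target is that $(\mathcal{Z},\Sigma_\mathcal{Z})$ be standard Borel, which is assumed here. Since $\sigma(g)\subseteq\sigma(f)$, it yields a measurable $h:\mathcal{Y}\to\mathcal{Z}$ with $g=h\circ f$. The implication $2\Rightarrow 3$ is trivial: take $\tilde h:=h$, forgetting measurability. For $3\Rightarrow 4$, let $x,y\in\mathcal{X}$ with $f(x)=f(y)$. Then
\[
g(x)=\tilde h(f(x))=\tilde h(f(y))=g(y),
\]
using only that $\tilde h$ is a function.

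For $4\Rightarrow 1$, I will apply Proposition~\ref{prop:4.8}. Its hypotheses are that $\mathcal{X}$ is analytic Borel, $\mathcal{Y}$ is separable, $f$ and $g$ are measurable, and the fibre condition in statement~4 holds; these are exactly what is available here. It gives $\sigma(g)\subseteq\sigma(f)$, which closes the cycle.

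The substantive step is $4\Rightarrow 1$, which rests on the Souslin-type separation argument inside Proposition~\ref{prop:4.8}. Since that proposition is already established, the only care needed is to check that its hypotheses are met. In particular, the standard Borel assumption on $\mathcal{Z}$ is used only in $1\Rightarrow 2$, and it is not needed in $4\Rightarrow 1$, because Proposition~\ref{prop:4.8} places no restriction on $\mathcal{Z}$.
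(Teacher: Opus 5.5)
Your proof is correct and is essentially the paper's own argument: the Doob--Dynkin Lemma for $1\Rightarrow 2$, the trivial implications $2\Rightarrow 3\Rightarrow 4$, and Proposition~\ref{prop:4.8} for $4\Rightarrow 1$. The paper states the first step as the full equivalence $1\Leftrightarrow 2$, but closing the cycle as you do amounts to the same thing, and your remark that the standard Borel hypothesis on $\mathcal{Z}$ enters only through Doob--Dynkin is accurate.
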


\begin{proof}
 The equivalence $1.\Leftrightarrow 2.$ is a direct consequence of the Doob-Dynkin Lemma. The implications $2.\Rightarrow 3.$ and $3.\Rightarrow 4.$ are trivial. The implication $4.\Rightarrow 1.$ is a direct consequence of Proposition \ref{prop:4.8}.
\end{proof}

The next corollary shows that the conditions required by Proposition \ref{prop:4.9} can be relaxed if we do not require equality at every point, which is a condition that we do not need to prove our methods. Before we prove it, we need the following lemma.

In the next lemma, $\mathfrak{B}_\mathbb{R}$ denotes the Borel $\sigma$-algebra of $\mathbb{R}$.

\begin{lemma}
\label{lemma_doob}
Let $(\mathcal{X},\Sigma,\mathbb{P})$ be a probability space with $(\mathcal{X},\Sigma)$ being an analytic Borel space, and let $(\mathcal{Y},\Sigma_\mathcal{Y})$ be a separable measurable space and $f:\mathcal{X}\to\mathcal{Y}$ a measurable function. If $\alpha:\mathcal{Y}\to\mathbb{R}$ is injective and $\Sigma_\mathcal{Y} =\alpha^{-1}(\mathfrak{B}_\mathbb{R})$, then there exists a measurable function $\beta:\mathbb{R}\to\mathcal{Y}$ such that $\beta\circ \alpha \circ f=f$ $\mathbb{P}$-a.e.
\end{lemma}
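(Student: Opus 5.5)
Consider the real-valued random variable $Z:=\alpha\circ f:\mathcal{X}\to\mathbb{R}$. Since $\Sigma_\mathcal{Y}=\alpha^{-1}(\mathfrak{B}_\mathbb{R})$, the map $\alpha$ is $(\Sigma_\mathcal{Y},\mathfrak{B}_\mathbb{R})$-measurable, so $Z$ is $\Sigma$-measurable. The idea is to find a Borel set $B\subseteq\alpha[\mathcal{Y}]$ with $\mathbb{P}(Z\in B)=1$, and then define $\beta$ on $B$ as $\alpha^{-1}$ and as a constant $y_0\in\mathcal{Y}$ off $B$. (If $\mathcal{Y}$ is empty then so is $\mathcal{X}$ and there is nothing to prove.) Measurability of $\beta$ is then automatic. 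Indeed, for $E\in\Sigma_\mathcal{Y}$ write $E=\alpha^{-1}[F]$ with $F$ Borel; by injectivity $\alpha[E]=F\cap\alpha[\mathcal{Y}]$. Hence $\beta^{-1}[E]\cap B=F\cap B$, which is Borel, and $\beta^{-1}[E]\setminus B$ is either $\emptyset$ or $\mathbb{R}\setminus B$. On $\{Z\in B\}$ we get $\beta(\alpha(f(x)))=f(x)$ by injectivity, so the identity holds $\mathbb{P}$-a.e.

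The crux is producing $B$. The image $Z[\mathcal{X}]=\alpha[f[\mathcal{X}]]\subseteq\alpha[\mathcal{Y}]$ need not be Borel, but it should be analytic. Since $(\mathcal{X},\Sigma)$ is an analytic Borel space, $\mathcal{X}$ can be identified with an analytic subset of a Polish space, with $\Sigma$ its relative Borel $\sigma$-algebra. $Z$ is a Borel measurable real map on it, and Borel images of analytic sets under Borel maps are analytic (Cohn, Section~8.2). So $A:=Z[\mathcal{X}]$ is an analytic subset of $\mathbb{R}$, hence universally measurable (Cohn, Theorem~8.4.1). Take the image measure $\mathbb{Q}:=\mathbb{P}\circ Z^{-1}$ on $\mathfrak{B}_\mathbb{R}$. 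Universal measurability gives a Borel set $B\subseteq A$ with $\mathbb{Q}^*(A\setminus B)=0$, i.e.\ the inner measure of $A$ equals its outer measure. The outer measure is $1$, because any Borel $C\supseteq A$ satisfies $Z^{-1}[C]=\mathcal{X}$. Therefore $\mathbb{P}(Z\in B)=\mathbb{Q}(B)=1$, and $B\subseteq A\subseteq\alpha[\mathcal{Y}]$ as required.

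The main obstacle I expect is the analyticity of $Z[\mathcal{X}]$ under the paper's specific definition of analytic Borel space (Definition~\ref{def:analytic_Borel}). I would appeal to Cohn's Proposition~8.6.5, as already done in the proof of Proposition~\ref{prop:4.8}, to identify $(\mathcal{X},\Sigma)$ with an analytic set carrying its Borel $\sigma$-algebra. From there, the image of an analytic set under a Borel map into $\mathbb{R}$ is analytic, being a projection of the Borel graph. The measurability check for $\beta$ is the routine part.
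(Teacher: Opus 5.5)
Your proposal is correct and takes the same route as the paper. Both proofs produce a Borel set $B\subseteq(\alpha\circ f)[\mathcal{X}]$ with $\mathbb{P}((\alpha\circ f)^{-1}[B])=1$, set $\beta=\alpha^{-1}$ on $B$ and $\beta\equiv y_0$ off $B$, and verify measurability via $\beta^{-1}[E]\cap B=F\cap B$. The only difference is that the paper obtains $B$ by citing Cohn's Corollary~8.4.3 and Lemma~8.6.1 directly, whereas you derive it from analyticity of the image, universal measurability, and full outer measure under the image law; this is a valid expansion of that same step.
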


\begin{proof}

Suppose that $\alpha:\mathcal{Y}\to \mathbb{R}$ is an injective function such that $\Sigma_\mathcal{Y}=\alpha^{-1}(\mathfrak{B}_\mathbb{R})$.

In the proof of Proposition \ref{prop:4.8}, we demonstrated that our definition of analytic Borel spaces coincides with the definition of analytic spaces given by \cite{Cohn2013}. Therefore, using Corollary 8.4.3 and Lemma 8.6.1 of \cite{Cohn2013}, we can conclude that there exists $B\in\mathfrak{B}_\mathbb{R}$ such that $B\subseteq (\alpha\circ f)[\mathcal{X}]$ and $\mathbb{P}((\alpha\circ f)^{-1}[B])=1$.

Choose $y_0\in \mathcal{Y}$ and define $\beta:\mathbb{R}\to \mathcal{Y}$ by $\beta(t):=\begin{cases}\alpha^{-1}(t),&t\in B\\y_0,&t\notin B\end{cases}$.

Next, we will prove that $\beta$ is $(\mathfrak{B}_\mathbb{R},\Sigma_\mathcal{Y})$-measurable.

Let $E\in\Sigma_\mathcal{Y}$ be arbitrary. From the hypothesis, we know that there exists $A\in\mathfrak{B}_\mathbb{R}$ such that $E=\alpha^{-1}[A]$, allowing us to conclude that 
\begin{align}
\beta^{-1}[E]=&(\beta^{-1}[E]\cap B)\cup (\beta^{-1}[E]\cap B^c)=(\alpha[\alpha^{-1}[A]]\cap B)\cup (\beta^{-1}[E]\cap B^{c})\\=&(A\cap \alpha[\mathcal{Y}]\cap B)\cup (\beta^{-1}[E]\cap B^c)=(A\cap B)\cup (\beta^{-1}[E]\cap B^c)
\end{align}

Therefore, since $A,B\in \mathfrak{B}_\mathbb{R}$ and $\beta$ is constant on $B^c$, we infer that $\beta^{-1}[E]\in \mathfrak{B}_\mathbb{R}$, implying that $\beta$ is $(\mathfrak{B}_\mathbb{R},\Sigma_\mathcal{Y})$-measurable.

Since $(\alpha\circ f)(x)\in B$ for every $x\in (\alpha\circ f)^{-1}[B]$, we have $\beta(\alpha(f(x)))=f(x)$ for every $x\in (\alpha\circ f)^{-1}[B]$, implying that $\beta\circ \alpha\circ f=f$ $\mathbb{P}$-a.e., because $\mathbb{P}((\alpha\circ f)^{-1}[B])=1$.

\end{proof}

\begin{corollary}
\label{cor_doob}
Let $(\mathcal{X},\Sigma,\mathbb{P})$ be a probability space with $(\mathcal{X},\Sigma)$ being an analytic Borel space, and let $(\mathcal{Y},\Sigma _\mathcal{Y})$ and $(\mathcal{Z},\Sigma _\mathcal{Z})$ be separable measurable spaces. Suppose that $f:\mathcal{X}\to \mathcal{Y}$ and $g:\mathcal{X}\to \mathcal{Z}$ are measurable functions. If $(\forall x,y\in \mathcal{X})\big(f(x)=f(y)\Rightarrow g(x)=g(y)\big)$, then there exists a measurable function $h:\mathcal{Y}\to \mathcal{Z}$ such that $g=h\circ f$ $\mathbb{P}$-a.e.
\end{corollary}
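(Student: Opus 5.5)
The plan is to reduce the problem to Lemma~\ref{lemma_doob} by encoding the separable space $\mathcal{Z}$ into $\mathbb{R}$, and then to use Proposition~\ref{prop:4.8} to pass from the pointwise hypothesis to an inclusion of $\sigma$-algebras.

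\textbf{Step 1 (encoding $\mathcal{Z}$).} Since $(\mathcal{Z},\Sigma_\mathcal{Z})$ is separable, it is countably generated by some $\{C_k\}_{k\in\mathbb{N}}$ and separates points. The Marczewski function
\[
\alpha(z):=\sum_{k\in\mathbb{N}} 2\cdot 3^{-(k+1)}\mathbf{1}_{C_k}(z)
\]
is then injective from $\mathcal{Z}$ into the Cantor set. Moreover, $\alpha^{-1}(\mathfrak{B}_\mathbb{R})=\Sigma_\mathcal{Z}$, because each $C_k$ is the preimage under $\alpha$ of a Borel set (a digit condition), and $\alpha$ is measurable.

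\textbf{Step 2 (factoring $\alpha\circ g$ through $f$).} Consider the measurable map $\alpha\circ g:\mathcal{X}\to\mathbb{R}$. The hypothesis $f(x)=f(y)\Rightarrow g(x)=g(y)$ gives $f(x)=f(y)\Rightarrow \alpha(g(x))=\alpha(g(y))$. Proposition~\ref{prop:4.8} applies, since $\mathcal{X}$ is analytic Borel and $\mathcal{Y}$ is separable, and yields $\sigma(\alpha\circ g)\subseteq\sigma(f)$. Because $\mathbb{R}$ is standard Borel, the Doob--Dynkin Lemma (Proposition~\ref{doob_lemma}) provides a measurable $\varphi:\mathcal{Y}\to\mathbb{R}$ with $\alpha\circ g=\varphi\circ f$ everywhere.

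\textbf{Step 3 (decoding).} Lemma~\ref{lemma_doob}, applied to $g:\mathcal{X}\to\mathcal{Z}$ together with the injective $\alpha$, gives a measurable $\beta:\mathbb{R}\to\mathcal{Z}$ with $\beta\circ\alpha\circ g=g$ $\mathbb{P}$-a.e. Set $h:=\beta\circ\varphi:\mathcal{Y}\to\mathcal{Z}$, which is measurable as a composition of measurable maps. Then
\[
h\circ f=\beta\circ\varphi\circ f=\beta\circ\alpha\circ g=g \quad\mathbb{P}\text{-a.e.}
\]

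\textbf{Main obstacle.} The step needing the most care is Step~1, namely verifying that $\alpha$ is injective and generates exactly $\Sigma_\mathcal{Z}$. This is a standard fact, but it must be stated carefully: injectivity follows from point separation by the generators, and the identity $\alpha^{-1}(\mathfrak{B}_\mathbb{R})=\Sigma_\mathcal{Z}$ uses that the sets $\{t:\text{$k$-th ternary digit of } t \text{ is } 2\}$ are Borel. Once this encoding is in place, the remaining steps are direct applications of the earlier results.
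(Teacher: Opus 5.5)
Your proof is correct and matches the paper's argument. Both encode $\mathcal{Z}$ into $\mathbb{R}$ via an injective $\alpha$ with $\alpha^{-1}(\mathfrak{B}_\mathbb{R})=\Sigma_\mathcal{Z}$, factor $\alpha\circ g=\varphi\circ f$ everywhere, and decode with the $\beta$ from Lemma~\ref{lemma_doob}; your Step~2 (Proposition~\ref{prop:4.8} plus Doob--Dynkin) is exactly what the paper gets by citing Proposition~\ref{prop:4.9}. The one difference is that you construct $\alpha$ explicitly as the Marczewski function, where the paper cites an external result for its existence; in Step~1, take the digit sets to be the clopen subsets of the Cantor set defined by the $\{0,2\}$-expansion, so that each $C_k$ is exactly their preimage.
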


\begin{proof}
Suppose that $(\forall x,y\in \mathcal{X})\big(f(x)=f(y)\Rightarrow g(x)=g(y)\big)$.

By hypothesis, $(\mathcal{Z},\Sigma _\mathcal{Z})$ is a separable measurable space; therefore, by Corollary D.7.3.III of \cite{Cavalcante2026}, there exists an injective measurable function $\alpha:\mathcal{Z}\to\mathbb{R}$ such that $\Sigma_\mathcal{Z}=\alpha ^{-1}(\mathfrak{B}_\mathbb{R})$.

The hypothesis implies that $(\forall x,y\in \mathcal{X})(f(x)=f(y)\Rightarrow (\alpha \circ g)(x)=(\alpha \circ g)(y)\big)$, allowing us to conclude, using Proposition \ref{prop:4.9}, that there exists a measurable function $h':\mathcal{Y}\to \mathbb{R}$ such that 
\begin{equation}
\label{eq_doob}
\alpha \circ g=h'\circ f
\end{equation}

By Lemma \ref{lemma_doob}, there exists a measurable function $\beta:\mathbb{R}\to \mathcal{Z}$ such that $\beta\circ \alpha\circ g=g$ $\mathbb{P}$-a.e. Using (\ref{eq_doob}), this implies that $g=h\circ f$ $\mathbb{P}$-a.e., where $h:\mathcal{Y}\to \mathcal{Z}$ given by $h:=\beta \circ h'$ is measurable.
\end{proof}

We now proceed to the proofs of the methods presented in Section 3.

\begin{lemma}
\label{lemma:4.1}
Let $\mathcal{E}:=(\mathcal{X},\Sigma,\{P_\theta\}_{\theta\in\Theta})$ be a statistical model, $\mu :\Sigma\to\overline{\mathbb{R}}$ a $\sigma$-finite measure such that $\mathcal{E}\ll \mu$, and $T:\mathcal{X}\to (\mathcal{T},\Sigma _\mathcal{T})$. Suppose that $(\mathcal{X},\Sigma )$ is an analytic Borel space and $(\mathcal{T},\Sigma _\mathcal{T})$ is separable. For each $\theta\in\Theta$, choose $f_\theta \in [dP_\theta /d\mu ]$. For any $x\in \mathcal{X}$ and $\Theta'\subseteq\Theta$, define

$$D(x,\Theta '):=\big\{y\in \mathcal{X}:(\exists h_{xy}\in (0,\infty ))(\forall \theta \in \Theta ')\big(f_\theta (y)=f_\theta (x)h_{xy}\big)\big\}.$$

If there exists a non-empty countable subset $\Theta _0\subseteq\Theta$ that is dense in $(\Theta,\tau \langle \mathcal{E}\rangle )$ such that, for any $x,y\in\mathcal{X}$ satisfying $T(x)=T(y)$, we have $y\in D(x,\Theta _0)$, then $T$ is $\mathcal{E}$-sufficient.
\end{lemma}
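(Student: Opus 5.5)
We plan to reduce the lemma to Proposition~\ref{prop:4.7} together with the Doob--Dynkin type results (Proposition~\ref{prop:4.8}, Corollary~\ref{cor_doob}). Enumerate $\Theta_0=\{\theta_n\}_{n\in\mathbb{N}}$, repeating elements if $\Theta_0$ is finite, and fix $c_n>0$ with $\sum_n c_n=1$. Since $\Theta_0$ is dense in $(\Theta,\tau\langle\mathcal{E}\rangle)$, Proposition~\ref{prop:4.7} gives $P:=\sum_n c_nP_{\theta_n}$ with $\mathcal{E}\equiv P$. It also says that $\mathfrak{A}:=\vee_n\sigma(g_{\theta_n})$ is (minimal) sufficient for any choice $g_{\theta_n}\in[dP_{\theta_n}/dP]$. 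The whole point is then to choose these versions well and show $\mathfrak{A}\subseteq\sigma(T)$ up to null sets. Sufficiency of $T$ then follows, because a $\sigma$-algebra that contains a sufficient one modulo $\mathfrak{N}_\mathcal{E}$ and is contained in $\Sigma$ is sufficient. More concretely, $dP_\theta/dP$ would be $\sigma(T)$-measurable a.e. for every $\theta$, and the standard factorization characterization yields sufficiency of $\sigma(T)$, hence of $T$.

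\textbf{Choice of versions.} Put $f:=\sum_n c_n f_{\theta_n}$, a $\mu$-version of $dP/d\mu$ by Proposition~\ref{prop:4.2}. Let $N:=\{f=0\}\cup\{f=\infty\}$, which is $P$-null. Define $g_{\theta_n}(x):=f_{\theta_n}(x)/f(x)$ on $N^c$ and $0$ on $N$. By the chain rule, $g_{\theta_n}\in[dP_{\theta_n}/dP]$.

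\textbf{Pointwise structure on $N^c$.} If $T(x)=T(y)$, the hypothesis gives $h_{xy}>0$ with $f_{\theta_n}(y)=h_{xy}f_{\theta_n}(x)$ for all $n$. Summing gives $f(y)=h_{xy}f(x)$. So $x\in N$ iff $y\in N$, and off $N$ we get $g_{\theta_n}(y)=g_{\theta_n}(x)$ for every $n$. Now set $G:=(g_{\theta_n})_{n\in\mathbb{N}}:\mathcal{X}\to\mathbb{R}^\mathbb{N}$. This map is measurable into a standard Borel space, and $\sigma(G)=\mathfrak{A}$. We have shown that $T(x)=T(y)$ implies $G(x)=G(y)$ for all $x,y$: on $N$ both values are the zero sequence, and off $N$ this is the computation above. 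This is exactly why we use the single countable family and the normalized versions: the factor $h_{xy}$ cancels in the ratio.

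\textbf{Conclusion via Doob--Dynkin.} Proposition~\ref{prop:4.9} applies, since $\mathcal{X}$ is analytic Borel, $\mathcal{T}$ is separable, and $\mathbb{R}^\mathbb{N}$ is standard Borel. It gives a measurable $H:\mathcal{T}\to\mathbb{R}^\mathbb{N}$ with $G=H\circ T$ everywhere, so $\mathfrak{A}\subseteq\sigma(T)$. Hence each $dP_{\theta_n}/dP$ has a $\sigma(T)$-measurable version. For general $\theta$ we use that $\mathfrak{A}$ is sufficient. As in the proof of Proposition~\ref{prop:4.7}, $dP_\theta/dP=\mathbb{E}_P[dP_\theta/dP\mid\mathfrak{A}]$ $P$-a.e., so every $dP_\theta/dP$ has an $\mathfrak{A}$-measurable, hence $\sigma(T)$-measurable, version. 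The factorization criterion relative to the dominating probability $P\equiv\mathcal{E}$ (Halmos--Savage) then gives that $\sigma(T)$ is sufficient. Equivalently, $\sigma(T)\supseteq\mathfrak{A}$ and $\mathfrak{A}$ is sufficient, so conditional expectations given $\mathfrak{A}$ serve as $\sigma(T)$-versions via the tower property. Either route yields that $T$ is sufficient.

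The main obstacle is the version bookkeeping in the second and third steps. We must verify that the normalized $g_{\theta_n}$ are genuine $P$-densities, treating the null set $N$ consistently so that the pointwise implication holds for all $x,y$ and not just almost everywhere. We must also ensure that the last step (a $\sigma$-algebra containing a sufficient one being sufficient) is valid. The latter is true via the factorization theorem because $P$ is equivalent to $\mathcal{E}$, and that is what we will rely on.
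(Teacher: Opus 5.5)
Your proposal is correct and follows the paper's proof essentially step for step. It uses the same normalized versions $g_{\theta_n}=f_{\theta_n}/f$ (set to $0$ where $f\in\{0,\infty\}$) and the same cancellation of $h_{xy}$ to get $T(x)=T(y)\Rightarrow G(x)=G(y)$. It then uses Proposition~\ref{prop:4.8} (through Proposition~\ref{prop:4.9}) for $\sigma(G)\subseteq\sigma(T)$. Finally it enlarges a sufficient $\sigma$-algebra, which the paper takes from Corollary~8.5 of \cite{Heyer1982} and you justify via Halmos--Savage factorization relative to $P\equiv\mathcal{E}$. Keep the factorization justification and drop the ``tower property'' aside: $\mathfrak{A}$-conditional probabilities are not versions of $\sigma(T)$-conditional probabilities, and enlarging a sufficient $\sigma$-algebra preserves sufficiency only because the model is dominated (it can fail otherwise, as in Burkholder's classical example).
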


\begin{proof}
Suppose that there exists a non-empty countable subset $\Theta _0\subseteq\Theta$ that is dense in $(\Theta,\tau \langle \mathcal{E}\rangle )$ such that, for any $x,y\in\mathcal{X}$ satisfying $T(x)=T(y)$, we have $y\in D(x,\Theta _0)$. 

Suppose that $\Theta _0=\{\theta _n\}_{n\in\mathbb{N}}$. From Proposition \ref{prop:4.7}.i, we know that $P:\Sigma\to \mathbb{R}$ given by $P:=\sum _{n\in\mathbb{N}}2^{-(n+1)}P_{\theta _n}$ is a probability measure such that $\mathcal{E}\equiv P$.

By hypothesis, $\mathcal{E}\ll \mu$; therefore, $P\ll \mu$. Moreover, due to the almost everywhere uniqueness of a Radon-Nikodym derivative, we can conclude that $dP/d\mu =\sum _{n\in\mathbb{N} }2^{-(n+1)}f_{\theta _n}$ $\mu$-a.e.

Define $f:\mathcal{X}\to \overline{\mathbb{R}}$ by $f:=\sum _{n\in\mathbb{N}}2^{-(n+1)}f_{\theta _n}$. Then $f$ is a non-negative measurable function. Let $n\in\mathbb{N}$ be an arbitrary element. We have $\frac{dP_{\theta_n}}{d\mu }=\frac{dP_{\theta _n}}{dP}\frac{dP}{d\mu }$, which implies that $f_{\theta _n}=\frac{dP_{\theta _n}}{dP}f$ $\mu$-a.e.

Observe that $P(f\in \{0,\,\infty \})=0$, which implies that $P(f\in (0,\infty ))=1$.

Define $g_{\theta _n}:\mathcal{X}\to\mathbb{R}$ by $g_{\theta _n}(x):=\begin{cases}\frac{f_{\theta _n}(x)}{f(x)},&f(x)\in (0,\infty )\\0,&f(x)\in\{0,\infty \}\end{cases}$.

Then $g_{\theta _n}$ is a non-negative measurable function. Moreover, we have $g_{\theta _n}=\frac{dP_{\theta _n}}{dP}$ $P$-a.e., which implies that $g_{\theta _n}\in [dP_{\theta _n}/dP]$.

The previous results hold for every $n\in\mathbb{N}$ since, by hypothesis, $n\in\mathbb{N}$ is arbitrary.

Define $S:\mathcal{X}\to \mathbb{R}^\mathbb{N}$ by $S(x)(n):=g_{\theta _n}(x)$. Then $S$ is measurable. Furthermore, it is straightforward to verify that $\sigma (S)=\vee _{n\in\mathbb{N}}\sigma (g_{\theta _n})$, allowing us to conclude, using Proposition \ref{prop:4.7}.ii, that $\sigma (S)$ is $\mathcal{E}$-sufficient. Next, we will prove that, for any $x,y\in \mathcal{X}$ satisfying $T(x)=T(y)$, we have $S(x)=S(y)$.

Let $x,y\in\mathcal{X}$ be arbitrary elements such that $T(x)=T(y)$. Then, using the hypothesis, we obtain $y\in D(x,\Theta _0)$, implying that there exists $h_{xy}\in (0,\infty )$ such that, for each $n\in\mathbb{N}$, we have $f_{\theta _n}(y)=f_{\theta_n} (x)h_{xy}$. Hence, using the definition of $f$, we conclude that $f(y)=f(x)h_{xy}$.

If $f(x)\in \{0,\infty \}$, then $f(y)\in \{0,\infty \}$, allowing us to conclude that $S(x)(n)=0=S(y)(n)$ for every $n\in\mathbb{N}$ and, consequently, that $S(x)=S(y)$. Suppose that $f(x)\notin \{0,\infty \}$. Then $f(y)\notin\{0,\infty \}$, which implies that for each $n\in\mathbb{N}$ we have $$S(y)(n)=\frac{f_{\theta _n}(y)}{f(y)}=\frac{f_{\theta _n}(x)h_{xy}}{f(x)h_{xy}}=\frac{f_{\theta _n}(x)}{f(x)}=S(x)(n)$$ and, consequently, $S(x)=S(y)$.

Therefore, for any $x,y\in \mathcal{X}$ satisfying $T(x)=T(y)$, we have $S(x)=S(y)$, which implies, using Proposition \ref{prop:4.8} and the hypotheses that $(\mathcal{X},\Sigma )$ is analytic Borel and $(\mathcal{T},\Sigma _\mathcal{T})$ is separable, that $\sigma (S)\subseteq \sigma (T)$. Hence, recalling that we proved that $S$ is $\mathcal{E}$-sufficient, we conclude, using Corollary 8.5 in  \cite{Heyer1982}, that $\sigma (T)$ is $\mathcal{E}$-sufficient and, consequently, that $T$ is $\mathcal{E}$-sufficient.
\end{proof}

The next corollary is an immediate consequence of Corollary \ref{cor:4.1} and Lemma~\ref{lemma:4.1}, with
$(\Theta,\tau\langle\mathcal E\rangle)$ replaced by $(\Theta,d_{\mathcal E})$.

\begin{corollary}
\label{cor:4.2}
Let $\mathcal{E}:=(\mathcal{X},\Sigma,\{P_\theta\}_{\theta\in\Theta})$ be a statistical model, $\mu :\Sigma\to\overline{\mathbb{R}}$ a $\sigma$-finite measure such that $\mathcal{E}\ll \mu$, and $T:\mathcal{X}\to (\mathcal{T},\Sigma _\mathcal{T})$. Suppose that $(\mathcal{X},\Sigma )$ is an analytic Borel space and $(\mathcal{T},\Sigma _\mathcal{T})$ is separable. For each $\theta\in\Theta$, choose $f_\theta \in [dP_\theta /d\mu ]$. For any $x\in \mathcal{X}$ and $\Theta'\subseteq\Theta$, define $D(x,\Theta '):=\big\{y\in \mathcal{X}:(\exists h_{xy}\in (0,\infty ))(\forall \theta \in \Theta ')\big(f_\theta (y)=f_\theta (x)h_{xy}\big)\big\}$. If there exists a non-empty countable subset $\Theta _0\subseteq\Theta$ that is dense in $(\Theta,d_\mathcal{E})$ such that, for any $x,y\in\mathcal{X}$ satisfying $T(x)=T(y)$, we have $y\in D(x,\Theta _0)$, then $T$ is $\mathcal{E}$-sufficient.
\end{corollary}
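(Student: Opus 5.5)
By Corollary \ref{cor:4.1}, it suffices to reduce the statement to Lemma~\ref{lemma:4.1}. The point is that density in the total variation pseudo-metric is a stronger requirement than density in the weak topology $\tau\langle\mathcal{E}\rangle$.

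Concretely, I would argue as follows. Let $\Theta_0\subseteq\Theta$ be a non-empty countable set that is dense in $(\Theta,d_\mathcal{E})$, and suppose it satisfies the hypothesis that $T(x)=T(y)$ implies $y\in D(x,\Theta_0)$.

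First, I would use Proposition~\ref{prop:4.4}, which gives $\tau\langle\mathcal{E}\rangle\subseteq\tau(d_\mathcal{E})$. Density of $\Theta_0$ is inherited by any coarser topology: every non-empty $\tau\langle\mathcal{E}\rangle$-open set is also $\tau(d_\mathcal{E})$-open, so it meets $\Theta_0$. This is exactly the content of Corollary~\ref{cor:4.1}, so $\Theta_0$ is dense in $(\Theta,\tau\langle\mathcal{E}\rangle)$.

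Second, I would apply Lemma~\ref{lemma:4.1} with the same $\mu$, the same versions $f_\theta\in[dP_\theta/d\mu]$, and the same $\Theta_0$. The remaining hypotheses of that lemma hold verbatim under the present assumptions:
\begin{itemize}
\item $(\mathcal{X},\Sigma)$ is an analytic Borel space;
\item $(\mathcal{T},\Sigma_\mathcal{T})$ is separable;
\item $\Theta_0$ is non-empty and countable;
\item $T(x)=T(y)$ implies $y\in D(x,\Theta_0)$.
\end{itemize}
Lemma~\ref{lemma:4.1} then yields that $T$ is $\mathcal{E}$-sufficient.

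There is no substantive obstacle. The only point needing care is that "dense" refers to the topology $\tau(d_\mathcal{E})$ induced by the pseudo-metric, and that $d_\mathcal{E}$ is only a pseudo-metric. Neither matters: the comparison of topologies in Proposition~\ref{prop:4.4} is stated for $\tau(d_\mathcal{E})$ as defined, and the density argument uses only open sets.
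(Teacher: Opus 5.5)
Your proposal is correct and is the paper's own argument. Corollary~\ref{cor:4.1} (via Proposition~\ref{prop:4.4}) turns $d_\mathcal{E}$-density of $\Theta_0$ into density in $(\Theta,\tau\langle\mathcal{E}\rangle)$, and Lemma~\ref{lemma:4.1} then applies verbatim with the same $\mu$, the same versions $f_\theta$, and the same $\Theta_0$.
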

\begin{proof}
    By Corollary \ref{cor:4.1}, $\Theta_0$ dense in $(\Theta,d_{\mathcal E})$ implies $\Theta_0$ dense in $(\Theta,\tau\langle\mathcal E\rangle)$, so the conclusion follows directly from Lemma~\ref{lemma:4.1}.
\end{proof}

Finally, we provide the proof of Method \ref{meth:3.1}.

\begin{proof}[Proof of Method \ref{meth:3.1}]
Suppose that $T$ is $\mathcal{E}$-sufficient and there exists a non-empty countable set $\Theta _0\subseteq\Theta$ such that, for any $x,y\in\mathcal{X}$ satisfying $y\in D(x,\Theta _0)$, we have $T(x)=T(y)$.

From Proposition \ref{prop:4.2}, we know that there exists a non-empty countable subset $\Theta _{1}\subseteq \Theta$ such that $\mathcal{E}\equiv \{P_\theta \}_{\theta\in\Theta _{1}}$. Define $\Theta _2:=\Theta _0\cup \Theta _{1}$. Then $\Theta _2$ is a non-empty countable subset of $\Theta$. Suppose $\Theta _2=\{\theta _n\}_{n\in\mathbb{N}}$. Denote by $\overline{\Theta _2}$ the closure of $\Theta _2$ in $\Theta$ w.r.t. the pseudo-metric $d_\mathcal{E}$. Then $\mathcal{E}\equiv \{P_\theta \}_{\theta\in\overline{\Theta _2}}$, since $\Theta _1\subseteq\overline{\Theta _2}$.

Consider the statistical model $\mathcal{F}:=((\mathcal{X},\Sigma ),\{P_\theta\}_{\theta\in\overline{\Theta_2}})$. It is straightforward to verify that $\Theta _2$ is dense in $(\overline{\Theta _2},d_\mathcal{F})$ (because $d_\mathcal{F}$ is a restriction of $d_\mathcal{E}$), which implies, using Corollary \ref{cor:4.1}, that $\Theta _2$ is dense in $(\overline{\Theta _2},\tau \langle \mathcal{F}\rangle )$. Therefore, using Proposition \ref{prop:4.7}.i, we conclude that $P:\Sigma\to\mathbb{R}$ given by $P:=\sum _{n\in\mathbb{N}}2^{-(n+1)}P_{\theta _n}$ is a probability measure satisfying $\mathcal{F}\equiv P$.

By hypothesis, $\mathcal{E}\ll \mu$; therefore, $\mathcal{F}\ll \mu $, which implies $P\ll \mu $. Moreover, due to the almost everywhere uniqueness of a Radon-Nikodym derivative, we can conclude that  $dP/d\mu =\sum _{n\in\mathbb{N} }2^{-(n+1)}f_{\theta _n} $ $\mu$-a.e.

Define $f:\mathcal{X}\to \overline{\mathbb{R}}$ by $f:=\sum _{n\in\mathbb{N}}2^{-(n+1)}f_{\theta _n}$. Then $f$ is a non-negative measurable function. Let $n\in\mathbb{N}$ be arbitrary. We have $\frac{dP_{\theta_n}}{d\mu }=\frac{dP_{\theta _n}}{dP}\frac{dP}{d\mu }$ $\mu$-a.e., which implies that $f_{\theta _n}=\frac{dP_{\theta _n}}{dP}f$ $\mu$-a.e.

Observe that $P(f\in \{0,\,\infty \})=0$, which implies $P(f\in (0,\infty ))=1$.

Define $g_{\theta _n}:\mathcal{X}\to \mathbb{R}$ by $g_{\theta _n}(x):=\begin{cases}\frac{f_{\theta _n}(x)}{f(x)},&f(x)\in (0,\infty )\\f_{\theta _n}(x),&f(x)\in\{0,\infty \}\end{cases}$. Then $g_{\theta _n}$ is a non-negative measurable function. Moreover, we have $g_{\theta _n}=\frac{dP_{\theta _n}}{dP}$ $P$-a.e., which implies that $g_{\theta _n}\in [dP_{\theta _n}/dP]$.

The previous results hold for every $n\in\mathbb{N}$ since, by hypothesis, $n\in\mathbb{N}$ is arbitrary.

Define $S:\mathcal{X}\to \mathbb{R}^\mathbb{N}$ by $S(x)(n):=g_{\theta _n}(x)$. Then $S$ is measurable. Furthermore, it is straightforward to prove that $\sigma (S)=\vee _{n\in\mathbb{N}}\sigma (g_{\theta _n})$, allowing us to conclude, using Propositions \ref{prop:4.5} and \ref{prop:4.7}.ii, that $S$ is minimal $\mathcal{F}$-sufficient since $\{\theta _n\}_{n\in\mathbb{N}}$ is dense in $(\overline{\Theta _2},\tau \langle \mathcal{F}\rangle )$. Next, we will prove that, for any $x,y\in\mathcal{X}$ satisfying $S(x)=S(y)$, we have $T(x)=T(y)$.

Let $x,y\in\mathcal{X}$ be arbitrary elements satisfying $S(x)=S(y)$.

First, suppose that $f(x)\in \{0,\infty \}$. If $f(y)\notin \{0,\infty \}$, then $$f_{\theta _n}(x)=S(x)(n)=S(y)(n)=\frac{f_{\theta _n}(y)}{f(y)}$$ for every $n\in\mathbb{N}$, which implies, recalling the definition of $f$, that $$f(x)=\sum _{n\in\mathbb{N}}2^{-(n+1)}f_{\theta _n}(x)=\frac{\sum _{n\in\mathbb{N}}2^{-(n+1)}f_{\theta _n}(y)}{f(y)}=1,$$ contradicting the assumption $f(x)\in \{0,\infty \}$. Hence, $f(y)\in \{0,\infty \}$, which implies that, for each $n\in\mathbb{N}$, we have $f_{\theta _n}(x)=S(x)(n)=S(y)(n)=f_{\theta _n}(y)$ and, consequently, $y\in D(x,\Theta _0)$ (because $\Theta _0\subseteq\Theta _2$). Therefore, using the hypothesis, we infer that $T(x)=T(y)$.

Now suppose that $f(x)\notin \{0,\infty \}$. Repeating the previous argument, we can conclude that $f(y)\notin \{0,\infty \}$, which implies that $\frac{f_{\theta _n}(x)}{f(x)}=S(x)(n)=S(y)(n)=\frac{f_{\theta _n}(y)}{f(y)}$ for each $n\in\mathbb{N}$, implying that $f_{\theta _n}(y)=f_{\theta _n}(x)h_{xy}$ for every $n\in\mathbb{N}$, where $h_{xy}:=\frac{f(y)}{f(x)}>0$. Therefore, we obtain $y\in D(x,\Theta _0)$ (because $\Theta _0\subseteq\Theta _2$), which implies, using the hypothesis, that $T(x)=T(y)$.

From the previous cases, we conclude that $T(x)=T(y)$.

Therefore, for any $x,y\in\mathcal{X}$ satisfying $S(x)=S(y)$, we have $T(x)=T(y)$, which implies, using Corollary \ref{cor_doob}, that there exists a measurable function $h:\mathbb{R}^\mathbb{N}\to \mathcal{T}$ such that $T=h(S)$ $P$-a.e. Hence, $T=h(S)$ $P_\theta$-a.e. for every $\theta\in\overline{\Theta_2}$.

By hypothesis, $T$ is $\mathcal{E}$-sufficient; therefore, $T$ is $\mathcal{F}$-sufficient, allowing us to conclude, using the previous a.e. equality and the fact that $S$ is minimal $\mathcal{F}$-sufficient, that $T$ is minimal $\mathcal{F}$-sufficient. Hence, $T$ is $\mathcal{E}$-sufficient and minimal $\{P_\theta\}_{\theta\in\overline{\Theta _2}}$-sufficient, allowing us to conclude, using the definition of minimal sufficiency of statistics and recalling that $\mathcal{E}\equiv \{P_\theta \}_{\theta\in\overline{\Theta _2}}$ , that $T$ is minimal $\mathcal{E}$-sufficient.
\end{proof}

Before proving Method \ref{meth:3.2}, we establish the following lemma.

\begin{lemma}
\label{lemma:4.2}
Let $\mathcal{E}:=(\mathcal{X},\Sigma,\{P_\theta\}_{\theta\in\Theta})$ be a statistical model and $\mu :\Sigma\to\overline{\mathbb{R}}$ a $\sigma$-finite measure such that $\mathcal{E}\ll \mu$. For each $\theta\in\Theta$, choose $f_\theta \in [dP_\theta /d\mu ]$. If $\Theta _0\subseteq \Theta$ is a subset such that, for each $\theta\in\Theta$, there exists a sequence $(\theta _n)_{n\in\mathbb{N}}$ of $\Theta _0$ satisfying $f_\theta =\lim_{n\to\infty}f_{\theta _n}$ $\mu$-a.e., then $\Theta _0$ is dense in $(\Theta,d_\mathcal{E})$.
\end{lemma}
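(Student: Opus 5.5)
Fix $\theta\in\Theta$ and $\varepsilon>0$. Take a sequence $(\theta_n)_{n\in\mathbb{N}}$ in $\Theta_0$ with $f_{\theta_n}\to f_\theta$ $\mu$-a.e., as the hypothesis provides. By Proposition~\ref{prop:4.3},
$$d_\mathcal{E}(\theta_n,\theta)=\frac{1}{2}\int_\mathcal{X}|f_{\theta_n}-f_\theta|\,d\mu .$$
It therefore suffices to show that $\int_\mathcal{X}|f_{\theta_n}-f_\theta|\,d\mu\to 0$. Then $d_\mathcal{E}(\theta_n,\theta)<\varepsilon$ for large $n$, so every open $d_\mathcal{E}$-ball around $\theta$ meets $\Theta_0$, which is the claimed density.

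The $L^1(\mu)$ convergence comes from Scheff\'e's lemma. Each $f_{\theta_n}$ and $f_\theta$ is a version of a Radon--Nikodym derivative of a probability measure, so these functions are $\mu$-a.e.\ non-negative and
$$\int f_{\theta_n}\,d\mu=P_{\theta_n}(\mathcal{X})=1=\int f_\theta\,d\mu .$$
To avoid relying on Scheff\'e's lemma as a black box, use the identity $|a-b|=a+b-2\min(a,b)$. It gives
$$\int|f_{\theta_n}-f_\theta|\,d\mu=2-2\int\min(f_{\theta_n},f_\theta)\,d\mu .$$
Now $0\le\min(f_{\theta_n},f_\theta)\le f_\theta$ $\mu$-a.e., and $\min(f_{\theta_n},f_\theta)\to f_\theta$ $\mu$-a.e. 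Since $f_\theta$ is $\mu$-integrable, dominated convergence yields $\int\min(f_{\theta_n},f_\theta)\,d\mu\to 1$. Hence $\int|f_{\theta_n}-f_\theta|\,d\mu\to 0$.

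The main point needing care is this step. A.e.\ pointwise convergence alone does not give $L^1$ convergence. What makes it work is that all the densities have the same total mass $1$, together with domination of the minimum by the fixed integrable function $f_\theta$.

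A minor technicality is that versions $f_\theta:\mathcal{X}\to\mathbb{R}$ might take negative values on a $\mu$-null set. This does not affect any integral. Also, $\sigma$-finiteness of $\mu$ is needed only for Proposition~\ref{prop:4.3} and for the existence of the densities.
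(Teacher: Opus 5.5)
Your proof is correct and follows the paper's argument: you obtain $L^1(\mu)$ convergence from $\mu$-a.e.\ convergence plus equal total mass $1$ (Scheff\'e), then convert it to $d_\mathcal{E}(\theta_n,\theta)\to 0$ via Proposition~\ref{prop:4.3}. The only difference is that you prove the Scheff\'e step inline, using $|a-b|=a+b-2\min(a,b)$ and dominated convergence, whereas the paper cites an external proposition for it.
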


\begin{proof}
Suppose that $\Theta _0\subseteq \Theta$ is a subset such that, for each $\theta\in\Theta$, there exists a sequence $(\theta _n)_{n\in\mathbb{N}}$ of $\Theta _0$ satisfying $f_\theta =\lim_{n\to\infty}f_{\theta _n}$ $\mu$-a.e. Let $\theta\in\Theta $ and $\varepsilon >0$ be arbitrary elements. To conclude the proof, it suffices to prove that there exists $\theta '\in\Theta _0$ such that $d_\mathcal{E}(\theta ,\theta ')<\varepsilon $.

From the hypothesis, we know that there exists a sequence $(\theta _n)_{n\in\mathbb{N}}$ of $\Theta _0$ such that $f_\theta =\lim_{n\to\infty}f_{\theta _n}$ $\mu$-a.e. Since $\int _\mathcal{X}f_{\theta _n}d\mu =\int _\mathcal{X}f_\theta d\mu =1<\infty $ for every $n\in\mathbb{N}$, we can conclude, using Proposition 1.7.8 of \cite{Lerner2014}, that $\lim_{n\to\infty}\int _\mathcal{X}|f_{\theta _n}-f_\theta |d\mu =0$, which implies, using Proposition \ref{prop:4.3}, that $\lim_{n\to\infty}d_\mathcal{E}(\theta _n,\theta )=0$ and, consequently, that there exists $N\in\mathbb{N}$ such that $d_\mathcal{E}(\theta _N,\theta )<\varepsilon $.
\end{proof}

\begin{proof}[Proof of Method \ref{meth:3.2}]
Suppose that, for any $x,y\in\mathcal{X}$, we have $T(x)=T(y)$ iff $y\in D(x)$.

Define $D(x,\Theta _0):=\big\{y\in \mathcal{X}:(\exists h_{xy}\in (0,\infty ))(\forall \theta \in \Theta_0 )\big(f_\theta (y)=f_\theta (x)h_{xy}\big)\big\}$. Next, we will prove that, for any $x,y\in\mathcal{X}$, we have $T(x)=T(y)$ iff $y\in D(x,\Theta _0)$.

$(\Rightarrow )$ Suppose that $T(x)=T(y)$. Then, using the hypothesis, we conclude that $y\in D(x)$, which implies that $y\in D(x,\Theta _0)$, since $D(x)\subseteq D(x,\Theta _0)$. $\blacksquare$

$(\Leftarrow )$ Suppose that $y\in D(x,\Theta _0)$. Then there exists $h_{xy}\in (0,\infty )$ such that $f_\theta (y)=f_\theta (x)h_{xy}$ for every $\theta\in\Theta _0$. Let $\theta\in\Theta$ be an arbitrary element. From the hypotheses, we know that there exists a sequence $(\theta _n)_{n\in\mathbb{N}}$ of $\Theta _0$ such that the limit $\lim_{n\to\infty}f_{\theta_n}$ exists $\mu$-a.e. and for every $z\in\mathcal{X}$ we have 

\begin{equation}
\label{eq_3.2}
f_\theta (z)=\begin{cases}\lim_{n\to \infty}f_{\theta _n}(z),&\text{if the limit exists,}\\0,&\text{otherwise}.\end{cases}
\end{equation}

Since $f_{\theta_n}(y)=f_{\theta_n}(x)h_{xy}$ for every $n\in\mathbb{N}$ and $h_{xy}\in(0,\infty)$, we can infer that $\lim_{n\to\infty}f_{\theta_n}(x)$ exists if and only if $\lim_{n\to\infty}f_{\theta_n}(y)$ exists, allowing us to conclude, using (\ref{eq_3.2}), that $f_\theta(y)=f_\theta(x)h_{xy}$. As $\theta\in\Theta$ was arbitrary, it follows that $f_\theta(y)=f_\theta(x)h_{xy}$ for every $\theta\in\Theta$, hence $y\in D(x)$. Therefore, by the hypothesis $T(x)=T(y)$.
$\blacksquare$

From the previous implications, we conclude that, for any $x,y\in\mathcal{X}$, we have $T(x)=T(y)$ iff $y\in D(x,\Theta _0)$, which allows us to conclude, using Lemmas \ref{lemma:4.1} and \ref{lemma:4.2} and Method \ref{meth:3.1}, that $T$ is minimal $\mathcal{E}$-sufficient.
\end{proof}

We conclude with the proof of Method 3.3 taken from Proposition 1.6.9 in \cite{Pfanzagl1994}.

\begin{proof}[Proof of Method \ref{meth:3.3}]
Suppose that, for any $a_0,\cdots,a_k\in\mathbb{R}$ satisfying $\sum _{i=1}^ka_i\eta_i (\theta )=a_0$ for every $\theta\in\Theta$, we have $a_i=0$ for every $i\in \{0,\cdots ,k\}$. Next, we will use Method \ref{meth:3.1} to obtain the desired result.

Define $T:\mathcal{X}\to \mathbb{R}^k$ by $T(x):=(T_1(x),\cdots, T_k(x))$. Then $T$ is measurable. Define $\eta :\Theta \to \mathbb{R}^k$ by $\eta (\theta ):=(\eta _1(\theta ),\cdots,\eta _k(\theta ))$.

Denote by $\langle \cdot ,\cdot \rangle $ the canonical inner product of $\mathbb{R}^k$. That is, $\langle a,b\rangle :=\sum _{i=1}^ka_ib_i$ for any $a:=(a_1,\cdots, a_k),b:=(b_1,\cdots,b_k)\in\mathbb{R}^k$. From the hypothesis, we know that, for each $\theta\in\Theta$, we have $\frac{dP_\theta }{d\mu }(x)=e^{\langle \eta (\theta ),T(x)\rangle -B(\theta )}h(x)$ $\mu$-a.e. Therefore, using the Neyman-Fisher Factorization Theorem, we conclude that $T$ is sufficient.

Define $\nu :\Sigma \to \overline {\mathbb{R}}$ by $\nu (E):=\int _Eh(x)d\mu (x)$. Then $\nu$ is a $\sigma$-finite measure. Moreover, for any $E\in \Sigma$ and $\theta\in\Theta$, we have $$P_\theta (E)=\int _Ee^{\langle \eta (\theta ),T(x)\rangle -B(\theta )}h(x)d\mu (x)=\int _Ee^{\langle \eta (\theta ),T(x)\rangle-B(\theta )}d\nu (x),$$ which allows us to conclude that $\mathcal{E}\ll \nu$ and 

\begin{equation}
\label{eqmeth:3.3.1}
\frac{dP_\theta }{d\nu }(x)=e^{\langle \eta (\theta ),T(x)\rangle -B(\theta )}\,\,\nu\text{-a.e.}
\end{equation}

For each $\theta\in\Theta$, define $f_\theta :\mathcal{X}\to \mathbb{R}$ by $f_\theta (x):=e^{\langle\eta (\theta),T(x)\rangle -B(\theta ) }$. From (\ref{eqmeth:3.3.1}), we know that $f_\theta \in [dP_\theta/d\nu ]$ for every $\theta\in\Theta$.

Denote by $\tau _{\mathbb{R}^k}$ the standard topology on $\mathbb{R}^k$.

For each subset $\Theta'\subseteq\Theta $, define $\Delta (\Theta '):=\{\eta (\theta ):\theta\in\Theta '\}$. Since $\Delta (\Theta )\subseteq \mathbb{R}^k$, we can conclude that $(\Delta (\Theta ), \tau _{\mathbb{R}^k}|_{\Delta (\Theta )})$ is a separable metrizable space, which implies that there exists a non-empty countable subset $\Delta _0\subseteq\Delta (\Theta )$ that is dense in $(\Delta (\Theta ), \tau _{\mathbb{R}^k}|_{\Delta (\Theta )})$, where $\tau _{\mathbb{R}^k}|_{\Delta (\Theta )}:=\{B\cap\Delta (\Theta ):B\in \tau _{\mathbb{R}^k}\}$ is the subspace topology.

Since $\Delta _0\subseteq\Delta (\Theta )$, we can conclude that there exists a non-empty countable subset $\Theta _0\subseteq\Theta$ such that $\Delta _0=\Delta (\Theta _0)$.

Define $D(x,\Theta _0):=\big\{y\in \mathcal{X}:(\exists h_{xy}\in (0,\infty ))(\forall \theta \in \Theta _0)\big(f_\theta (y)=f_\theta (x)h_{xy}\big)\big\}$.

Let $x,y\in\mathcal{X}$ be arbitrary elements such that $y\in D(x,\Theta _0)$. Next, we will prove that $T(x)=T(y)$. Then, there exists $h_{xy}\in (0,\infty )$ such that $f_\theta (y)=f_\theta (x)h_{xy}$ for every $\theta\in\Theta_0$, allowing us to infer, using the definition of $f_\theta$, that, for each $\theta\in\Theta _0$, we have the constant $e^{\langle \eta (\theta ),T(y)-T(x)\rangle }=h_{xy}$ and, consequently, that 

\begin{equation}
\label{eqmeth:3.3.2}
\langle \eta (\theta ),T(y)-T(x)\rangle =c
\end{equation} for every $\theta\in \Theta _0$, where $c:=\log (h_{xy})$.

Since $\Delta (\Theta _0)$ is dense in $(\Delta (\Theta ),\tau _{\mathbb{R}^k}|_{\Delta (\Theta )})$, we can conclude, using (\ref{eqmeth:3.3.2}) and the continuity of the function $\mathbb{R}^k\to \mathbb{R},\,t\mapsto \langle t,T(y)-T(x)\rangle $, that $\langle \eta (\theta ),T(y)-T(x)\rangle =c$ for every $\theta\in\Theta$.

Therefore, using the definition of the inner product $\langle \cdot ,\cdot \rangle $, we conclude that, for each $\theta\in\Theta$, we have $\sum _{i=1}^k\eta _i(\theta )(T_i(y)-T_i(x))=c$, which implies, using the hypothesis, that $T_i(y)-T_i(x)=0$ for every $i\in \{1,\cdots,k\}$ and, consequently, that $T_i(x)=T_i(y)$ for every $i\in \{1,\cdots, k\}$. Hence, we obtain $T(x)=T(y)$.

From the previous results, we conclude that $T$ is sufficient and that there exists a non-empty countable subset $\Theta_0\subseteq\Theta$ such that, for any $x,y\in\mathcal{X}$ satisfying $y\in D(x,\Theta _0)$, we have $T(x)=T(y)$, which implies, using Method \ref{meth:3.1}, that $T$ is minimal sufficient.

\end{proof}

\section{Acknowledgments}

The first author acknowledges the financial support provided by CAPES (Coordenação de Aperfeiçoamento de Pessoal de Nível Superior), Brazil, and by CNPq (Conselho Nacional de Desenvolvimento Científico e Tecnológico), Brazil, Process No. 130535/2024-9.

\appendix
\section{Measurable-Space Background}
\label{sec:definitions}

For convenience, we collect in this appendix the measurable-space background used in Sections \ref{sec:methods} and \ref{sec:proofs}.

\begin{definition}\label{def:statistical_model}
A statistical model is a triple $(\mathcal{X},\Sigma,\{P_\theta\}_{\theta\in\Theta })$, where $(\mathcal{X},\Sigma )$ is a measurable space, $\Theta$ is a non-empty set, and $\{P_\theta\}_{\theta\in \Theta}$ is a collection of probability measures on $\mathcal{X}$.
\end{definition}

In practice, this model is often specified implicitly through a random sample, which is a measurable function $X:\Omega\to \mathcal{X}$ defined on a probability space such that the distribution induced by $X$ is $P_\theta$ for some $\theta\in\Theta$. A canonical example arises from a sequence of $n$ independent and identically distributed (i.i.d.) random variables, $X_1,\cdots, X_n$, where each variable shares a common distribution law $Q_\theta $ for some $\theta\in\Theta$ and takes values in a measurable space $(\mathcal{Y},\Sigma_\mathcal{Y} )$. This sequence represents the random sample $X:=(X_1,\cdots,X_n)$, whose probability law is the product measure $\otimes _{i=1}^nQ_\theta$, since $X_1,\cdots, X_n$ are independent. Consequently, the statistical model associated with this random sample is $(\mathcal{X},\Sigma,\{P_\theta\}_{\theta\in\Theta }):=(\mathcal{Y}^n,\otimes _{i=1}^n\Sigma_\mathcal{Y} ,\{\otimes _{i=1}^nQ_\theta\}_{\theta\in\Theta} )$, where $\otimes _{i=1}^n\Sigma _\mathcal{Y}$ is the product $\sigma$-algebra. For brevity, a sequence of i.i.d. random variables $X_1,\cdots,X_n$ itself is also commonly referred to as a random sample.

\begin{definition}\label{def:suff}
Let $\mathcal{E}:=(\mathcal{X},\Sigma,\{P_\theta\}_{\theta\in\Theta })$ be a statistical model and $(\mathcal{T},\Sigma _\mathcal{T})$ a measurable space. A statistic, i.e., a measurable function $T:\mathcal{X}\to \mathcal{T}$ is said to be ($\mathcal{E}$-)sufficient (w.r.t. that measurable space) iff for each $E\in \Sigma $, there exists a measurable function $\kappa _E:\mathcal{T}\to \mathbb{R}$ such that, for each $\theta\in\Theta$, we have $P_\theta (E|T)=\kappa _E(T)$ $P_\theta$-a.e. Furthermore, $T$ is said to be minimal ($\mathcal{E}$-)sufficient (w.r.t. measurable space) iff it is sufficient and, given any measurable space $(\mathcal{S},\Sigma _\mathcal{S})$ and sufficient statistic $S:\mathcal{X}\to (\mathcal{S},\Sigma _\mathcal{S})$, there exists a measurable function $f:\mathcal{S}\to \mathcal{T}$ such that, for each $\theta\in\Theta$, we have $T=f(S)$ $P_\theta$-a.e.
\end{definition}

Informally, $T$ is sufficient iff the conditional probability $P_\theta (E|T)$ does not depend on $\theta$ for every $E\in \Sigma$ and $T$ is minimal sufficient iff it is sufficient and a function of any other sufficient statistic.

\begin{definition}\label{def:stand_Borel}
A standard Borel space is a measurable space whose $\sigma$-algebra is the Borel $\sigma$-algebra of a complete separable metric space.
\end{definition}

Examples of standard Borel spaces include the Euclidean space $\mathbb{R}^n$, the extended real line $\overline{\mathbb{R}}:=\mathbb{R}\cup \{\pm \infty \}$, and the space of all real sequences $\mathbb{R}^\mathbb{N}$, which is often denoted by $\mathbb{R}^\infty $. Furthermore, if $(\mathcal{X},\Sigma )$ is a standard Borel space and $B\in \Sigma $, then $(B,\Sigma |_B)$ is also a standard Borel space, where $\Sigma |_B:=\{E\cap B:E\in \Sigma \}$ is the trace $\sigma$-algebra. This is a consequence of Corollary 13.4 in \cite{Kechris1995}.

\begin{definition}\label{def:countable_space}
A measurable space $(\mathcal{X},\Sigma )$ is said to be countably generated iff $\Sigma$ is generated by a countable collection of measurable sets.
\end{definition}

Examples of such spaces include the Borel $\sigma$-algebras on second-countable topological spaces such as $\mathbb{R}^n$, $\overline{\mathbb{R}}$, and $\mathbb{R}^\mathbb{N}$. Moreover, every standard Borel space is countably generated. This is a consequence of Proposition 2.1.9 in \cite{Srivastava1998}.

Finally, we introduce two more concepts.

\begin{definition}\label{def:separable}
A measurable space $(\mathcal{X},\Sigma )$ is said to be separable iff it is countably generated and $\{x\}\in \Sigma $ for every $x\in \mathcal{X}$.
\end{definition}

Every standard Borel space is separable since in a metric space every singleton is a closed set and, hence, measurable. Moreover, it can be proved that a measurable space is separable iff it is the Borel $\sigma$-algebra of a separable metric space, as can be seen in Proposition 4.3.10 of \cite{Doberkat2015}.

\begin{definition}\label{def:analytic_Borel}
A measurable space $(\mathcal{X},\Sigma )$ is said to be analytic Borel iff it is separable and there exist a standard Borel space $(\mathcal{Y},\Sigma _\mathcal{Y})$ and a surjective measurable function $f:\mathcal{Y}\to \mathcal{X}$.
\end{definition}

Hence, every standard Borel space is an analytic Borel space.

\end{document}